\newtheorem{theorem}{Theorem}[section]
\newtheorem{lemma}[theorem]{Lemma}
\newtheorem{proposition}[theorem]{Proposition}
\newtheorem{corollary}[theorem]{Corollary}
\theoremstyle{definition}
\newtheorem{definition}[theorem]{Definition}
\newtheorem{example}[theorem]{Example}
\newtheorem{remark}[theorem]{Remark}
\newcommand{\canBeErased}[1]{#1}
\definecolor{colValyaOK}{rgb}{0.1,0.40,0.13}
\newcommand{\cD}{\mathcal{D}}
\newcommand{\cL}{\mathcal{L}}
\newcommand{\cO}{\mathcal{O}}
\newcommand{\cA}{\mathcal{A}}
\newcommand{\cV}{\mathcal{V}}
\newcommand{\cE}{\mathcal{E}}
\newcommand{\cB}{\mathcal{B}}
\newcommand{\cF}{\mathcal{F}}
\newcommand{\cZ}{\mathcal{Z}}
\newcommand{\Cstar}{\mathbb{C}^*}
\newcommand{\bC}{\mathbb{C}}
\newcommand{\bQ}{\mathbb{Q}}
\newcommand{\bZ}{\mathbb{Z}}
\newcommand{\bP}{\mathbb{P}}
\newcommand{\bV}{\mathbb{V}}
\DeclareMathOperator{\GL}{GL}
\DeclareMathOperator{\SL}{SL}
\DeclareMathOperator{\rank}{rk}
\DeclareMathOperator{\tail}{tail}
\DeclareMathOperator{\innt}{int}
\DeclareMathOperator{\Hom}{Hom}
\DeclareMathOperator{\cadiv}{CaDiv}
\DeclareMathOperator{\orb}{orb}
\DeclareMathOperator{\Aut}{Aut}
\DeclareMathOperator{\sym}{Sym}
\DeclareMathOperator{\spec}{Spec}
\DeclareMathOperator{\Spec}{Spec}
\DeclareMathOperator{\supp}{supp}
\DeclareMathOperator{\id}{id}
\newcommand{\faceof}{\preceq}
\DeclareMathOperator{\Bl}{Bl}
\newcommand{\kG}{\Gamma}
\DeclareMathOperator{\kdirlim}{\kd \lim_{\raisebox{0.0ex}{$\longrightarrow$}}}
\DeclareMathOperator{\divisor}{div}
\newcommand{\ovl}{\overline}
\newcommand{\wt}{\widetilde}
\newcommand{\wh}{\widehat}
\newcommand{\kU}{V}
\newcommand{\kUU}{\CV}
\newcommand{\Utor}{\kU^{\toroidal}}
\newcommand{\cUtor}{\kUU^{\toroidal}}
\newcommand{\kN}{{H'}}
\newcommand{\wondY}{\ko{Y}}
\newcommand{\cf}{\Sigma} 
\newcommand{\cfX}{\Sigma^X} 
\newcommand{\xSigma}{\fanX}
\newcommand{\cfXprime}{\Sigma^{X'}} 
\newcommand{\fanX}{\Sigma_X} 
\DeclareMathOperator{\toroidal}{tor}
\DeclareMathOperator{\pre}{pre}
\newcommand{\Xtor}{X^{\toroidal}} %
\newcommand{\XtorHat}{\wh{X}^{\toroidal}} %
\newcommand{\fanXtil}{\Sigma_{\Xtil}} 
\newcommand{\Xtil}{\widetilde{X}} %
\newcommand{\fanY}{\Sigma_Y} 
\newcommand{\Colors}{\CC}   
\newcommand{\ColorsN}{\CC'} 
\newcommand{\UColorsTor}{\Delta_X^{\toroidal}} 
\newcommand{\UColorsTil}{\til{\Delta}_{{X}}} 
\newcommand{\UColorsN}{\Delta_{Y}} 
\newcommand{\colH}{D}     
\newcommand{\colN}{D'}    
\newcommand{\leftT}{{T}}
\newcommand{\rightT}{{\TT}}
\newcommand{\valC}{\cV}	  
\newcommand{\valCN}{\cV'}   
\newcommand{\CFC}{\cF_C} 
\DeclareMathOperator{\toric}{\TV}
\DeclareMathOperator{\ttoric}{\relTV}
\DeclareMathOperator{\TV}{\mathbb{TV}}
\DeclareMathOperator{\relTV}{\wt{\TV}}	
\newcommand{\shift}{\overline{\rho}}	
\DeclareMathOperator{\tits}{\phi}
\newcommand{\PP}{\mathbb P}
\newcommand{\TT}{\mathbb T}
\newcommand{\A}{\mathbb A}
\newcommand{\Z}{\mathbb Z}
\newcommand{\C}{\mathbb C}
\newcommand{\Q}{\mathbb Q}
\newcommand{\CA}{{\mathcal A}}
\newcommand{\CC}{{\mathcal C}}
\newcommand{\CF}{{\mathcal F}}
\newcommand{\CO}{{\mathcal O}}
\newcommand{\CV}{{\mathcal V}}
\newcommand{\CX}{{\mathcal X}}
\newcommand{\entspr}{\mathop{\widehat{=}}}  
\definecolor{intOrange}{rgb}{1.0,.310,.0} 
\DeclareMathOperator{\Grass}{Grass} 
\newcommand{\kB}{B} 
\newcommand{\kBalt}{B^-} 
\newcommand{\chQ}{/\hspace{-0.3em}/^{\mbox{\tiny ch}}} 
\newcommand{\til}[1]{\widetilde{#1}}
\renewcommand{\iff}{\Leftrightarrow}
\newcommand{\gHom}{\mbox{\rm Hom}}
\newcommand{\Mor}{\operatorname{Mor}}
\newcommand{\pFan}{\mathcal{S}} 
\newcommand{\pfan}{\til{\mathcal{S}}} 
\newcommand{\pDiv}{\mathcal{D}} 
\newcommand{\pf}{\mathfrak{f}} 
\renewcommand{\div}{\operatorname{div}}
\newcommand{\disjcup}{\sqcup} 
\newcommand{\Gl}{\operatorname{GL}}
\newcommand{\Sl}{\operatorname{SL}}
\newcommand{\PGL}{\operatorname{PGL}}
\newcommand{\kst}{\,|\;}
\newcommand{\kST}{\,\Big|\;}
\newcommand{\kd}{\displaystyle}
\newcommand{\surj}{\rightarrow\hspace{-0.8em}\rightarrow}
\newcommand{\kss}{\scriptscriptstyle}
\newcommand{\kbb}{{\kss \bullet}}
\newcommand{\ko}{\overline}
\newcommand{\rato}{-\hspace{-0.3em}\to}
\newcommand{\Pol}{\operatorname{Pol}}
\newcommand{\dual}{^{\scriptscriptstyle\vee}}
\DeclareMathOperator{\Quot}{Quot}
\DeclareMathOperator{\loc}{loc}
\newcommand{\bT}{\til{T}} 
\newcommand{\bN}{\til{N}} 
\newcommand{\bSigma}{\til{\Sigma}}
\newcommand{\yT}{T_Y} 
\newcommand{\yN}{N_Y} 
\newcommand{\proots}{R^+} 
\newcommand{\broots}{\nabla} 
\DeclareMathOperator{\stab}{stab}
\def\a{\alpha}
\newcommand{\valuationConeGlTwo}{%
\psset{xunit=0.6cm,yunit=.6cm}
\begin{pspicture}(-2,-2)(2,2)%
\pspolygon[linecolor=white,fillstyle=solid,fillcolor=gray2]
(-2,-2)(2,2)(2,-2)%
\psline[linewidth=.8pt]{<->}(0,-2)(0,2)
\psline[linewidth=.8pt]{<->}(2,0)(-2,0)
\qdisk(0,0){2pt}
\qdisk(1,1){1.5pt}
\qdisk(1,0){1.5pt}
\qdisk(-1,-1){1.5pt}
\qdisk(-1,1){2pt}
\psset{linecolor=white}
\qdisk(-1,1){1.2pt}
\psset{linecolor=black}
\end{pspicture}}
\newcommand{\BUaffineFour}{%
\psset{xunit=0.6cm,yunit=.6cm}
\begin{pspicture}(-2,-2)(2,2)%
\pspolygon[linecolor=white,fillstyle=solid,fillcolor=gray2]
(-2,-2)(2,2)(2,-2)%
\pspolygon[linecolor=white,fillstyle=solid,fillcolor=lightgray]
(0,0)(2,2)(2,0)%
\psline[linewidth=1.2pt](2,2)(0,0)(2,0)
\psline[linewidth=.8pt]{<->}(0,-2)(0,2)
\psline[linewidth=.8pt]{->}(0,0)(-2,0)
\qdisk(1,1){1.5pt}
\qdisk(1,0){1.5pt}
\qdisk(-1,-1){1.5pt}
\qdisk(-1,1){2pt}
\psset{linecolor=white}
\qdisk(-1,1){1.2pt}
\psset{linecolor=black}
\end{pspicture}}
\newcommand{\BUprojectiveFour}{%
\psset{xunit=0.6cm,yunit=.6cm}
\begin{pspicture}(-2,-2)(2,2)%
\pspolygon[linecolor=white,fillstyle=solid,fillcolor=lightgray]
(-2,-2)(2,2)(2,-2)%
\psline[linewidth=.8pt]{->}(0,0)(0,2)
\psline[linewidth=.8pt]{->}(0,0)(-2,0)
\psline[linewidth=1.2pt](2,2)(0,0)(2,0)
\psline[linewidth=1.2pt](0,0)(-2,-2)
\qdisk(1,1){1.5pt}
\qdisk(1,0){1.5pt}
\qdisk(-1,-1){1.5pt}
\qdisk(-1,1){2pt}
\psset{linecolor=white}
\qdisk(-1,1){1.2pt}
\psset{linecolor=black}
\end{pspicture}}
\newcommand{\affineFour}{%
\psset{xunit=0.6cm,yunit=.6cm}
\begin{pspicture}(-2,-2)(2,2)%
\pspolygon[linecolor=white,fillstyle=solid,fillcolor=gray2]
(-2,-2)(2,2)(2,-2)%
\pspolygon[linecolor=white,fillstyle=solid,fillcolor=lightgray]
(-2,2)(2,2)(2,0)(0,0)%
\psline[linewidth=.8pt]{->}(0,0)(0,-2)
\psline[linewidth=.8pt]{->}(0,0)(-2,0)
\psline[linewidth=1.5pt](-2,2)(0,0)(2,0)
\qdisk(1,1){1.5pt}
\qdisk(1,0){1.5pt}
\qdisk(-1,-1){1.5pt}
\psset{linecolor=red}
\qdisk(-1,1){2pt}
\psset{linecolor=white}
\qdisk(-1,1){1.2pt}
\psset{linecolor=black}
\psline[linewidth=.8pt,linestyle=dotted](0,0)(2,2)
\end{pspicture}}
\newcommand{\projectiveFour}{%
\psset{xunit=0.6cm,yunit=.6cm}
\begin{pspicture}(-2,-2)(2,2)%
\pspolygon[linecolor=white,fillstyle=solid,fillcolor=lightgray]
(0,0)(-2,2)(2,2)(2,-2)(-2,-2)%
\psline[linewidth=.8pt]{->}(0,0)(-2,0)
\psline[linewidth=1.5pt](-2,2)(0,0)(2,0)
\psline[linewidth=1.5pt](0,0)(-2,-2)
\qdisk(1,1){1.5pt}
\qdisk(1,0){1.5pt}
\qdisk(-1,-1){1.5pt}
\psset{linecolor=red}
\qdisk(-1,1){2pt}
\psset{linecolor=white}
\qdisk(-1,1){1.2pt}
\psset{linecolor=black}
\psline[linewidth=.8pt,linestyle=dotted](0,0)(2,2)
\end{pspicture}}
\newcommand{\GrassTwoFour}{%
\psset{xunit=0.6cm,yunit=.6cm}
\begin{pspicture}(-2,-2)(2,2)%
\pspolygon[linecolor=white,fillstyle=solid,fillcolor=lightgray]
(-2,-2)(-2,2)(2,2)(2,-2)%
\psline[linewidth=1.5pt](-2,2)(0,0)(2,0)
\psline[linewidth=1.5pt](0,0)(0,-2)
\qdisk(1,0){1.5pt}
\qdisk(0,-1){1.5pt}
\psset{linecolor=red}
\qdisk(-1,1){2pt}
\psset{linecolor=white}
\qdisk(-1,1){1.2pt}
\psset{linecolor=black}
\psline[linewidth=.8pt,linestyle=dotted](-2,-2)(2,2)
\end{pspicture}}
\begin{document}

\title[Merging divisorial with colored fans]
{Merging divisorial with colored fans}

\author[K.~Altmann]{Klaus Altmann}
\address{Institut f\"ur Mathematik und Informatik,
        Freie Universit\"at Berlin,
        Arnimallee 3,
        14195 Berlin, Germany}
\email{altmann@math.fu-berlin.de}

\author[V.~Kiritchenko]{Valentina Kiritchenko}
\address{National Research University
         Higher School of Economics,
         Vavilova 7, Moscow 112312 Russia}
\address{Institute for Information Transmission Problems RAS}
\email{vkiritch@hse.ru}
\thanks{The second author was supported by Dynasty foundation, AG Laboratory NRU HSE, RScF grant 14-21-00053 (Sections 3 and 7)
MESRF grants ag. 11.G34.31.0023, MK-983.2013.1 and by RFBR grants 12-01-31429-mol-a, 12-01-33101-mol-a-ved.
This study was carried out within ``The National Research
University Higher School of Economics'' Academic Fund Program in 2013-2014,
research grant No. 12-01-0194''.}

\author[L.~Petersen]{Lars Petersen}
\address{Institut f\"ur Mathematik und Informatik,
         Freie Universit\"at Berlin
	 Arnimallee 3,
	 14195 Berlin, Germany}
\email{petersen@math.fu-berlin.de}

\begin{abstract}
Given a spherical homogeneous space $G/H$ of minimal rank, we provide a simple
procedure to describe its embeddings as varieties with torus action
in terms of divisorial fans.
The torus in question is obtained as the identity component of the quotient group $N/H$, where $N$ is the normalizer of $H$ in $G$.
The resulting Chow quotient is equal to
(a blowup of) the simple toroidal compactification of $G/(H N^\circ)$.
In the horospherical case, for example, it is equal to a flag variety,
and the slices (coefficients) of the divisorial fan are merely shifts
of the colored fan along the colors.
\end{abstract}

\maketitle

\section{Introduction}
\label{sec:introSP}

\subsection{}
We are working over the base field $\C$.
Normal varieties $X$ coming with an effective action of an algebraic
torus $\rightT$ -- also called $\rightT$-varieties --
can be encoded by divisorial fans
$\pFan^X=\sum_{D\subseteq Y} \pFan^X_D\otimes D$ on algebraic
varieties $Y$ of
dimension equal to the complexity of the torus action. In this notation
$D\subseteq Y$ runs through all prime divisors on $Y$,
and $\pFan^X_D$ denotes a combinatorial object associated to $D$
(being non-trivial for finitely many summands only).
Let $N$ denote the lattice of one-parameter subgroups
of $\rightT$. Every $\pFan^X_D$ stands for a polyhedral
subdivision of $N_\Q$ together with a prescribed labeling of its cells
referring to the set of affine charts covering $X$.
\\[1ex]
The $\rightT$-variety $X$ in question is then given as
a contraction of a toric
fibration over $Y$, and
the data $D$ and $\pFan^X_D$ describe exactly where and how this
fibration degenerates, respectively.
Vice versa $X$ can be reconstructed explicitly from $\pFan^X$
in two steps.
First one glues certain relative spectra over $Y$;
the result of this procedure is called $\ttoric(\pFan^X)$.
Finally, one obtains $X$ as $\toric(\pFan^X)$, which denotes
a certain birational contraction of $\ttoric(\pFan^X)$.
See Section \ref{sec:pDiv} for further details.

\subsection{}
\label{intro:spherical}
Let $G$ be a connected reductive group and $H \subset G$
a spherical subgroup such that the spherical homogenous space $G/H$ is of {\em minimal rank}
(see Definition \ref{def-rank}).
The goal of the present paper is to describe
spherical embeddings
$X\supseteq G/H$ by a divisorial fan $\pFan$,
i.e.\ $X=\toric(\pFan)$, on a modification
$Y$ of the simple, toroidal, and hence often wonderful
compactification $\wondY\supseteq G/\kN$ with $\kN:=H\cdot N_G(H)^\circ$,
cf.\ (\ref{sub:ColFans}),(\ref{subsec:toroidal}).
The latter spaces are very well understood; for any
$G$ there are only finitely many of them, and the modification
$Y\to\wondY$ is given by a certain fan $\fanY$
refining the valuation cone
$\valC_{\kN} \cong \Q^\ell_{\geq 0}$ of $\wondY$
where $l$ denotes the rank of $G/\kN$.
The basic tool for this construction will be the
Tits fibration $\tits:G/H\to G/\kN$. Its central fiber is the torus
$\rightT:=\kN/H$. It acts on $X$ from the right which turns
the spherical variety $X$ into a  $\rightT$-variety with $\pFan=\pFan^X$.
See Section~\ref{sec:sphVar} for further details about spherical varieties.
\\[1ex]
Fixing a Borel
subgroup $\kB\subseteq G$ such that $\kB\cdot H$ is open and dense in $G$,
denote by $\C(G/H)^{(\kB)}{\surj}\CX(G/H)$
the sets of $\kB$-semiinvariant functions and their
character lattice within $\CX_\kB:=\gHom(\kB,\C^*)$, respectively.
The dual lattices are connected by an exact sequence
$$
0 \to N \to \CX^*(G/H)\stackrel{p}{\to}\CX^*(G/\kN)\to 0,
$$
cf.\ \cite[Th\'{e}or\`{e}me 4.3(ii)]{BrionFrench} and
Proposition \ref{prop-lattices}.
Let $\Colors(G/\kN)$ denote the set of colors of $G/\kN$,
i.e.\ the set of $\kB$-invariant prime divisors of $G/\kN$.
After fixing a splitting of the above exact sequence,
our main result is the following

\begin{theorem}
\label{thm:main}
Let $X\supseteq G/H$ be a spherical embedding of minimal rank given by a colored fan $\cfX$
inside $\CX_\Q^*(G/H)$.
Denote by $\valC_H\subseteq\CX_\Q^*(G/H)$ the valuation
cone.
Then $X=\toric(\pFan)$ where $\pFan$ is a
divisorial fan on $(Y,N)$ with:
\\[0.5ex]
{\rm 1)}
The base space $Y$ is the toroidal spherical embedding of $G/\kN$
given by the (un-) colored fan $(\fanY,\emptyset)$
arising as {the image fan} {\rm (see Definition \ref{def-newFans})} of
$\cfX\cap\valC_H$ via the map $p$.
Its rays $a\in\fanY(1)$
correspond to the $G$-invariant divisors $D_a\subseteq Y$.
\\[0.5ex]
{\rm 2)} The maximal cells of the
divisorial fan $\pFan=\pFan^X$ describing $X$ as a $\rightT$-variety
are labeled by the maximal colored cones
$C=(C,\CFC)\in\cfX$ and the elements $w\in W$ of the Weyl group of $G$.
The part of $\pFan^X$ with label $(C,w)$ is equal to
$$
\pFan^X(C,w)=\sum_{a\in\fanY\!(1)}\hspace{-0.5em}\pFan^X_a(C)\otimes
D_a +
\hspace{-0.5em} \sum_{\colN \in \Colors(G/\kN)} \hspace{-0.8em}
(\shift(\colN)+\pFan^X_0(C))\otimes \ovl{\colN}
\;+\;
\hspace{-1.3em} \sum_{\colN \in \Colors(G/\kN)\setminus \CFC} \hspace{-1.5em}
\emptyset\otimes w\ovl{\colN}
$$
where $\pFan^X_a(C):=C\cap p^{-1}(a)$ is considered as an element
of $p^{-1}(a)\cong N_\Q$.
\end{theorem}

Note that the cells of the special fiber form
a fan $\pFan^X_0$.
Since it exhibits the asymptotic behavior of all other fibers
$\pFan^X_a$, we will sometimes also call it the tail fan $\tail(\pFan^X)$.
Let us furthermore point out that the coefficients of the colors
$\colN$ are just shifts of $\tail(\pFan^X)$. The shift vectors
$\shift(\colN)\in N$ are defined as projections to $N$
of the valuations $\rho_\colH\in \CX^*(G/H)$ corresponding to the colors of $G/H'$,
cf.\ (\ref{sub:toroidalThm}).
Finally, we would like to remark that the labeling
of the maximal cells is not quite
bijective. In (\ref{subsec:prelim1}) we will see that for a given
$C=(C,\CFC)\in\cfX$ the accompanying
$w\in W$ are rather parameterized by
$W/W_C$ for some subgroup $W_C\subseteq W$ depending on $\CFC$.
\\[0.5ex]

It would be interesting to generalize Theorem \ref{thm:main} to other spherical varieties.
As Example \ref{ex.GL_2/H} shows, the divisorial fan $\pFan^X$ should contain
additional maximal cells apart from those listed in Theorem \ref{thm:main}.
\subsection{}
\label{intro:applications}
We believe that merging these two partially combinatorial descriptions
via divisorial and colored fans
may help to obtain further results and insights into the realm of
spherical varieties, in particular concerning their deformation theory
(see e.g.\  \cite{toricDegSpherical}), and the computation of their Cox
rings.
As an example for the latter subject we would like to mention
that the Cox ring of a horospherical variety is
known to be a polynomial ring over the Cox
ring of a flag variety (\cite[Theorem 4.3.2]{coxWonderful}
or \cite[Theorem 3.8]{coxSpherical}).
However, an alternative way to understand
this might be to combine our
Theorem \ref{thm:main} with Theorem 1.2 from \cite{tvarcox}.

\subsection{}
\label{intro:organize}
The present paper is organized as follows. In Sections \ref{sec:pDiv} and
\ref{sec:sphVar}, we shortly review polyhedral divisors and
spherical varieties, respectively.
Section \ref{sec:sphVT} then
introduces the $\rightT$-action on a spherical
variety which is relevant for our purposes and was already announced
at the beginning of this section. Moreover,
the toroidal part of our main Theorem \ref{thm:main} appears there as
Theorem \ref{thm-toroidal}.
\\[0.5ex]
Sections \ref{sec:locTdown} and \ref{sec:locTriv}
contain the proof of Theorem \ref{thm-toroidal}. The main idea is
to reinterpret well-known facts from the spherical context within the context
of divisorial fans. Using the language of p-divisors allows us to recover
the encoded spherical variety directly from the given combinatorial data.
\\[0.5ex]
Section \ref{sec:invCol} finally deals with the non-toroidal case,
and we conclude by presenting several examples in
Section \ref{sec:examples}.

\subsection{Acknowledgment}
We are grateful to the referee for valuable comments and suggestions and for correcting an error in the first version of this paper.

\section{P-divisors and divisorial fans}
\label{sec:pDiv}


The upshot of \cite{tvar1, tvar2} is that normal varieties $X$ with a
complexity-$k$ action of an algebraic torus correspond to p-divisors
$\pDiv^X$ (for affine $X$) or divisorial fans $\pFan^X$ (for general $X$)
on a $k$-dimensional variety $Y$. The latter variety is the
so--called Chow quotient $Y=X\chQ \rightT$ and defined as a
GIT-limit quotient of the $\rightT$-action on $X$.
But any modification of $X\chQ \rightT$ could be taken as well.
Both data $\pDiv^X$ and $\pFan^X$ induce a diagram like
$$
\xymatrix@!0@C=5em@R=7.5ex{
Y & \til{X} \ar[l]_-{\pi} \ar[r]^-{r} & X
}
$$
where $r$ is a $\rightT$- equivariant proper birational contraction
resolving the indeterminacies of the rational quotient map $\pi:X\rato Y$.
While $X$ is obtained as $\toric(\pFan^X)$, the auxiliary
$\rightT$-variety $\til{X}$ shows up as $\ttoric(\pFan^X)$.
We are now going to recall this language in more detail.

\subsection{Polyhedral divisors}
\label{subsec:unConSimple}
Let $N\cong \Z^n$ be a free abelian group of rank $n$, and denote
its dual by $M:=\gHom(N,\Z)$. These data give rise to the torus
$\rightT=N\otimes_{\Z}\C^*$, and one can recover $M$ and $N$ as
its lattice of characters and 1-parameter subgroups, respectively.
Let us furthermore consider convex polyhedra
$\Delta\subseteq N_\Q := N\otimes_\Z\Q \cong \Q^n$.
The \emph{tail cone} of a polyhedron $\Delta$ is defined as
$$
\tail(\Delta):=\{a\in N_\Q\kst a+\Delta\subseteq \Delta\}.
$$
Note that the set of polyhedra with fixed tail cone $\sigma$ forms a
semigroup $\Pol^+(N,\sigma)$  with cancellation property (the addition
is given by the Minkowski sum).

\begin{definition}
\label{def-pDiv}
Let $Y$ be a normal, semi-projective (i.e.\
projective over an affine) variety and
fix a polyhedral, pointed cone $\sigma\subseteq N_\Q$.
A finite, formal sum
$\pDiv=\sum_D\Delta_D\otimes D$ is called a \emph{polyhedral divisor}
on $(Y,N)$ with $\tail(\pDiv)=\sigma$ if
\begin{enumerate}
\item
all $D$ are prime divisors on $Y$,
\item
all $\Delta_D\subseteq N_\Q$ are convex polyhedra with
$\tail(\Delta_D)=\sigma$, and
\item
for every $u\in\sigma\dual\cap M$ the evaluation
$\pDiv(u):=\sum_D \min\langle\Delta_D,u\rangle\cdot D$
is an element of the group of rational Cartier divisors
$\cadiv_\Q(Y)$ on $Y$.
\end{enumerate}
\end{definition}

\begin{remark}
(i)
The tail cone $\sigma$ serves as the neutral element
in $\Pol^+(N,\sigma)$, hence summands of the form
$\sigma\otimes D$ may be added or suppressed without having any
impact on $\pDiv$.
\\[0.5ex]
(ii)
On the other hand, we will also
allow $\emptyset$ as a possible coefficient.
While we define $\emptyset+\Delta:=\emptyset$, the summand
$\emptyset\otimes D$ indicates that the remaining sum is to be
considered on $Y\setminus D$ instead of $Y$.
This allows us to always ask for projective $Y$ although $\pDiv$
is only defined on its \emph{locus}
$\loc(\pDiv):=Y\setminus\bigcup_{\Delta_D=\emptyset} D$.
\\[0.5ex]
(iii)
Condition (3) is automatically fulfilled for $\Q$-factorial, in particular,
for smooth base varieties $Y$.
\end{remark}

Concavity of the $\min$ function, i.e.\
$\min\langle\Delta_D,u\rangle +\min\langle\Delta_D,v\rangle
\leq \min\langle\Delta_D,u+v\rangle$, implies that the
$M$-graded sheaf
$$
\CA:=\oplus_{u\in\tail(\pDiv)\dual\cap M} \;\CO_Y(\pDiv(u))
$$
carries the structure of an $\CO_Y$-algebra which induces
the following scheme over $Y$ or, actually, over $\loc(\pDiv)$.

\begin{definition}
\label{def-TVpDiv}
1) Let $\pDiv$ be a polyhedral divisor on $(Y,N)$. Then we call
$$
\ttoric(\pDiv):= \Spec_Y\,\CA \;\stackrel{\pi}{\to}
\loc(\pDiv)\hookrightarrow Y
$$
the \emph{relative} $\rightT$-variety associated to $\pDiv$.
It is affine if and only if $\loc(\pDiv)$ is.
\\[0.5ex]
2) $\pDiv$ is called {\em positive}
(or short ``p-divisor'') if $\pDiv(u)$ is semiample and big
on $\loc(\pDiv)$
for every $u\in\sigma\dual\cap M$ or $u\in\innt\sigma\dual\cap M$,
respectively. If this is the case, then we define its associated
\emph{absolute} $\rightT$-variety
$
\toric(\pDiv):= \Spec\,\kG(\loc(\pDiv),\,\CA)
$.
\end{definition}

We would like to remark that
also on a possibly non-complete variety $Y$
a rational Cartier divisor $D$ is called semiample if it admits a
basepoint-free multiple. Then, if $Y$ is semi-projective,
the spaces of sections $\kG(Y,\CO_Y(mD))$ are finitely generated
$\kG(Y,\CO_Y)$-modules, hence define maps to projective spaces
over $\kG(Y,\CO_Y)$. Their images do not depend on the choice of generators,
and we call a divisor $D$ big,
if $|mD|$ induces a birational morphism for $m\gg 0$.
It follows then
from \cite[Theorem 3.1]{tvar1} that
$\ttoric(\pDiv)\to\toric(\pDiv)$ are normal
varieties with the function field $\Quot\C(Y)[M]$. Moreover, the
$M$-grading of $\CA$ translates into a $\rightT$-action on both
varieties, and $\pi$ is a good quotient.
Finally, all normal, affine $\rightT$-varieties arise this way.

\subsubsection{Toric picture}
Affine toric varieties $X$ are $\rightT$-varieties of complexity $0$,
i.e.\ $Y = \textnormal{pt}$. The notion of a polyhedral
divisor collapses to its tail cone, that is a polyhedral cone
$\sigma \in N_\Q$ with $X = \toric(\sigma) = \ttoric(\sigma)$.

\subsubsection{$\C^* \curvearrowright \C^2$} \label{sub_Ex3actions}
For example, let us consider three different types of $\C^*$-actions
on the affine plane $\spec \C[x,y]$.
The latter are specified by their weights on the variables $x$ and $y$, respectively.
It is easy to check directly that these actions correspond to
the following polyhedral divisors
$\pDiv^\bullet = \Delta_0^\bullet \otimes 0 +
\Delta_\infty^\bullet \otimes \infty$ on $\bP^1$ such that
$\C^* \curvearrowright \C^2 = \toric(\pDiv^\bullet)$:\\[-1ex]

\begin{center}
\begin{tabular}{lcr}

\begin{tabular}{c|c|c}
$\deg x$ & $\deg y$ & type of action \\
\hline
1 & 0 & parabolic \\
\hline
1 & 1 & elliptic \\
\hline
1 & -1 & hyperbolic
\end{tabular}

& &

\begin{tabular}{c|c|c|c|c}
& $\Delta^\bullet_0$ & $\Delta^\bullet_\infty$ & tail cone & locus \\
\hline
$\pDiv^p$ & $[0,\infty)$ & $\emptyset$ & $[0,\infty)$ & $\bP^1 \setminus \infty$ \\
\hline
$\pDiv^e$ & $[1,\infty)$ & $[0,\infty)$ & $[0,\infty)$ & $\bP^1$ \\
\hline
$\pDiv^h$ & $[0,1]$ & $\emptyset$ & $\{0\}$ & $\bP^1 \setminus \infty$ \\
\end{tabular}

\end{tabular}
\end{center}

\subsection{Equivariant morphisms and divisorial fans}
\label{subsec:divFan}
Let $\pDiv'$ and $\pDiv$ be p-divisors on $(Y',N)$ and
$(Y,N)$, respectively. By \cite[Section 8]{tvar1},
$\TT$-equivariant maps $\ttoric(\pDiv')\to \ttoric(\pDiv)$
and $\toric(\pDiv')\to \toric(\pDiv)$
can be provided by a dominant map $\psi:Y'\to Y$ and a plurifunction
$\pf\in N\otimes \C(Y')$ such that
$\pDiv'\subseteq\psi^*\pDiv+\div(\pf)$. Here, both operators
$\psi^*$ and $\div$ are supposed to be applied to the divisors on $Y$
occurring in $\pDiv$ or the elements of $\C(Y')$ from $\pf$, respectively.
The inclusion sign is to be understood separately
for each of the polyhedral coefficients on both sides.
Moreover, it was shown in \cite[Section 8]{tvar1} that all
$\TT$-equivariant maps $\toric(\pDiv')\to \toric(\pDiv)$ arise in this
way when one allows to replace $\pDiv'$ on $Y'$ by
$p^*\pDiv'$ with an appropriate proper, birational map
$p:Y''\to Y'$ implying $\toric(p^*\pDiv')=\toric(\pDiv')$.
\\[1ex]
There is a special case which will play an important role later on.
Consider two p-divisors $\pDiv'=\sum_D\Delta'_D\otimes D$ and
$\pDiv=\sum_D\Delta_D\otimes D$ on $(Y,N)$ which satisfy
$\Delta'_D\subseteq \Delta_D$ for each $D$. Then we have
a $\rightT$-equivariant, open embedding
$\ttoric(\pDiv')\hookrightarrow \ttoric(\pDiv)$ if and only if
the polyhedra $\Delta'_y:=\sum_{D\ni y}\Delta'_D$ are faces of the
corresponding $\Delta_y:=\sum_{D\ni y}\Delta_D$ for all $y\in Y$,
cf.\ \cite[Prop 3.4, Remark 3.5(ii)]{tvar2}.
Moreover, it was also shown in loc.\ cit.\ that the condition
of $\toric(\pDiv')\hookrightarrow\toric(\pDiv)$ being an open embedding
implies this condition. If $\toric(\pDiv')$ is
an open subset of $\toric(\pDiv)$, then we will call
$\pDiv'$ a \emph{face} of $\pDiv$.

\begin{definition}\cite[Def 5.2]{tvar2}
A finite collection $\pFan$ of p-divisors on $(Y,N)$ is called
a \emph{divisorial fan} if for all $\pDiv,\pDiv'\in\pFan$ their intersection
$\pDiv\cap\pDiv'$ (taken via the polyhedral coefficients) is again a
p-divisor, a face of both $\pDiv$ and $\pDiv'$, and belongs to $\pFan$.
\end{definition}

Gluing all affine pieces together, the divisorial fan $\pFan$ gives rise to
the global $\rightT$-variety
$$
\toric(\pFan):=\kdirlim_{\pDiv\in\pFan}\hspace{-0.5em}\toric(\pDiv).
$$
Moreover, since all
coefficients $\Delta_D^{\pDiv}$ of $\pDiv$ ($\pDiv\in\pFan$)
fit into a polyhedral subdivision $\pFan_D$ of $N_\Q$, we may write
the divisorial fan as $\pFan=\sum_{D}\pFan_D\otimes D$.
In particular, all tail cones $\tail(\Delta_\kbb^{\pDiv})$ form
a fan $\tail(\pFan)$. The latter encodes the asymptotic behavior of
the \emph{slices} $\pFan_D$. However, to store all contents
of $\pFan$, it is still necessary to keep in mind which
cell of $\pFan$ belongs to which p-divisor $\pDiv\in\pFan$. This
is what we previously referred to as the \emph{labeling}. Note
however, that only the maximal elements of $\pFan$ matter for
this kind of information.

\subsubsection{Toric picture}
Open embeddings in the toric world correspond to inclusions of faces on
the level of polyhedral cones. Since divisorial fans coincide with their
polyhedral tail fans in this particular setting, face relations of
polyhedral divisors turn out to be the usual face relations
for polyhedral cones.

\canBeErased{%
\subsubsection{$\C^* \curvearrowright \bV(\cO_{\bP^1}(n))$} \label{sub:blowup}
For example, let us consider the geometric line bundle $p:\bV(\cO_{\bP^1}(n))\to \bP^1$
associated to $\cO(n)$ over $\bP^1$.
We assume that $\Cstar$ acts with weight $1$ on the fibers of $\bV(\cO_{\bP^1}(n))$
and trivially on its zero section $\bP^1 \to \bV(\cO_{\bP^1}(n))$.
This action is given by the following two maximal polyhedral divisors:
\[
\cD^1 = [n,\infty) \otimes 0 + \emptyset \otimes \infty \textnormal{ and }
\cD^2 = \emptyset \otimes 0 + [0,\infty) \otimes \infty\,.
\]
They correspond to affine charts $p^{-1}(\bP^1\setminus \{\infty\})$ and $p^{-1}(\bP^1\setminus \{0\})$, respectively,
and are glued along the polyhedral divisor
$\cD^1\cap\cD^2=\emptyset \otimes 0+\emptyset \otimes \infty$ using the plurifunctions
$n\otimes z_1/z_0$ and $0\otimes 1$ on $\bP^1$.
}

\subsubsection{$\C^* \curvearrowright \PP^2$}
Let us consider $\bP^2$ as a $\C^*$-variety with the following
action on its homogeneous coordinates: $\deg z_0 = 1$, $\deg z_1 = 0$, and
$\deg z_2=2$.
Using the corresponding toric downgrade (see Subsection \ref{subsec:torDown}) yields a divisorial fan
$\pFan$ with the following three maximal elements
$\pDiv^i = \Delta^i_0 \otimes 0 + \Delta^i_\infty \otimes \infty$\,.\\[-1ex]

\begin{center}
\begin{tabular}{c|c|c|c|c}
& $\Delta^i_0$ & $\Delta^i_\infty$ & tail cone & locus \\
\hline
$\pDiv^1$ & $[-1,0]$ & $\emptyset$ & $\{0\}$ & $\bP^1 \setminus \infty$ \\
\hline
$\pDiv^2$ & $[0,\infty)$ & $[1/2,\infty)$ & $[0,\infty)$ & $\bP^1$ \\
\hline
$\pDiv^3$ & $(-\infty,-1]$ & $(-\infty,1/2]$ & $(-\infty,0]$ & $\bP^1$ \\
\end{tabular}
\end{center}

\subsection{Compatible group actions}
\label{subsec:compGrpAct}
Let $X$ be a $\rightT$-variety together with another group $G$ acting
on it. We say that $\rightT$ \emph{normalizes} the $G$-action if
$\rightT\subseteq N_{\Aut(X)}(G)$.
This means that $\rightT$ acts on both $X$ and $G$, and, moreover,
the $G$-action $m:G\times X\to X$ is $\rightT$-equivariant
(with respect to the diagonal action of $\rightT$ on the left hand side).
In particular, this morphism can be understood in terms of
(\ref{subsec:divFan}). If $X$ is given by a p-divisor $\pDiv$ on
some variety $Y=X\chQ \rightT$, then $G\times X$ is given by a p-divisor
on $(G\times X)\chQ \rightT$ which looks like the familiar
$G$-bundle $X\times^{\rightT}G$ over $Y$.
\\[1ex]
The actions of $G$ and $\rightT$ even commute if and only if the
$\rightT$-action on $G$ is trivial. If this is the case, then $G$
acts on $Y$, too, and the diagram
$$
\xymatrix@R=2.5ex@C=0.4em{
G \ar@{=}[d] & \times & X\hspace{0.3em} \ar@{.>}[d]
\ar[rrr]^-{m} &&& \hspace{0.3em} X \ar@{.>}[d]\\
G & \times & Y\hspace{0.3em}
\ar[rrr]^-{m} &&& \hspace{0.3em} Y
}
$$
commutes. In the language of (\ref{subsec:divFan}) this means that the
p-divisors $G\times \pDiv$ and $m^*\pDiv$ only differ by some polyhedral
principal divisor $\div(\pf)$. If $\pDiv=\sum_D\Delta_D\otimes D$,
then the two p-divisors equal
$\sum_D\Delta_D\otimes (G\times D)$ and
$\sum_D\Delta_D\otimes m^*D$, respectively. Since
$\div(\pf)$ can only shift the polyhedral coefficients by integral vectors,
this means that the $\Delta_D$ for non-$G$-invariant prime divisors
$D$ have to be almost trivial, i.e.\ shifted tail cones.
This occurs e.g.\ for the coefficients of the colors as pointed out
in Theorem \ref{thm:main}.

\section{Spherical varieties}
\label{sec:sphVar}

\subsection{}
\label{subsec:sphericIntro}
In this section, we provide background on spherical varieties and colored fans.
Spherical varieties are natural generalizations of toric varieties.
They appear when a torus action is replaced by an action of an arbitrary
connected reductive group $G$.
\\[1ex]
A normal variety $X$ with a $G$-action is called {\em spherical} if a Borel subgroup
$B\subset G$ has an open dense orbit in $X$.
Similarly to toric varieties, every spherical variety contains only finitely many $G$--orbits and even finitely many $B$--orbits \cite[Remark 2.2]{Knop}.
Well-known examples of spherical varieties include horospherical varieties  (e.g.\ toric and flag varieties) \cite{pasquier1} and symmetric varieties (e.g.\ complete collineations and complete quadrics) \cite{cp1,cp2}.
\\[1ex]
A spherical $G$-variety $X$ can be regarded as a partial
$G$-equivariant compactification of a spherical homogeneous
space $G/H$ (isomorphic to the open $G$-orbit of $X$).
In what follows, by an {\em embedding} of a spherical homogenous space $G/H$ we mean a spherical $G$-variety $X$ together with a point $x\in X$ such that the $G$-orbit of $x$ is open in $X$, and the isotropy subgroup of $x$ equals $H$.
By a {\em compactification} of $G/H$ we mean a complete embedding of $G/H$.
The classification of spherical varieties consists of two parts.
The first part amounts to classifying all $G$--equivariant embeddings of a given spherical homogeneous space $G/H$.
Similarly to toric varieties, embeddings of $G/H$ can be classified by fans together with an extra structure provided by colors \cite{LV}.
Below we shortly recall this classification following \cite{Knop}.
\\[1ex]
The second part amounts to the classification of all spherical
homogeneous spaces which was finished only recently
using D.~Luna's program.
An exposition of the main steps of this program can be found e.g. in \cite{Bravi}.
The classification of spherical homogeneous spaces is based on the classification of {\em wonderful varieties}.
Recall that a smooth complete $G$--variety with an open dense orbit is called {\em wonderful} (of {\em rank $r$}) if
\begin{enumerate}
\item the complement to the orbit is the union of $r$ smooth irreducible divisors $D_1$,\ldots, $D_r$
with normal crossings;
\item for any $I\subset\{1,\ldots,r\}$ the intersection $\bigcap_{i\in I} D_i$ is a non-empty $G$--orbit closure.
\end{enumerate}
In particular, there is a unique closed $G$--orbit $D_1\cap\ldots\cap D_r$.
Wonderful varieties are spherical (see \cite{Bravi} for references).

\subsection{Colored fans}
\label{sub:ColFans}
We now introduce definitions needed to formulate classification results.
Let $G/H$ be a spherical homogeneous space.
As in (\ref{intro:spherical}) we fix a Borel subgroup $\kB$ such that $1\in G/H$
belongs to the dense orbit, i.e.\ we assume that $\kB\cdot H$ is
open and dense in $G$.
A {\em color} is a $\kB$-invariant irreducible divisor in $G/H$.
Let $\Colors=\Colors(G/H)$ denote the set of colors of $G/H$.
The {\em weight lattice} $\CX:=\CX(G/H)$ of $G/H$ is the set of all characters
of $\CX_\kB=\gHom(\kB,\C^*)$ that occur as weights of
eigenvectors for the natural action of $\kB$ on the field of rational functions $\C(G/H)$.
The rank of the weight lattice $\CX$ is called the {\em rank} of $G/H$.
Since $G/H$ is spherical,
for each weight in $\CX$, there exists a unique (up
to scalars) $\kB$-semiinvariant rational function with this weight
(\cite[S.6]{Knop}).
These functions are regular on $B\cdot H$, hence
there is an exact sequence
$$
\xymatrix@R=0.1ex@C=1.7em{
1 \ar[r] &
\C^* \ar[r] &
\C(G/H)^{(\kB)} \ar[r] &
\CX(G/H) \ar[r] &
0.\\
&& f \mbox{ (with $f(1)=1$)} \ar@{|->}[r] &
\chi(f)=f|_\kB^{-1}
}
$$
Thus a valuation $v$ on $\C(G/H)$ with values in $\Z$ gives rise to a linear function $\rho_v$ on $\CX$.
In particular, colors $\colH$ give rise to elements
$\rho_{\colH}\in\CX^*:=\Hom(\CX,\Z)$.
Let $\valC$ denote the set of all $G$--invariant $\Z$-valued valuations.
It turns out that the map $\rho:\valC\to\CX^*$, $v\mapsto\rho_v$ is injective.
The convex hull of the image of $\valC_\Q$ in $\CX^*_\Q:=\CX^*\otimes_\Z \Q$
is called the {\em valuation cone}.
In what follows, we identify $\valC_\Q$ with its image.
\\[1ex]
By a result of Brion and Knop \cite[Theorem 6.4]{Knop},
there exists a root system in $\CX$ such that its simple roots $\alpha_1$, \ldots, $\alpha_r$ give linear equations on the
facets of $\valC$, that is,
$$
\valC=\{x\in\CX^*_\Q \, |\, x(\a_i)\le 0, i=1,\ldots,r\}.
$$
In particular, the valuation cone is always cosimplicial.

\begin{definition}
\label{def-colCone}
Let $\CF$ be a subset (possibly empty) of $\Colors$ such that
$\rho(\CF)$ does not contain $0$,
and let $C\subseteq\CX^*_\Q$ be a strictly convex, polyhedral cone.
The pair $(C,\CF)$ is called a
{\em colored cone} with the set of colors $\CF$ if
\begin{enumerate}
\item $C$  is generated by $\rho(\CF)$ and some elements of $\valC$, and if
\item the relative interior of $C$ intersects the valuation cone.
\end{enumerate}
\end{definition}

For instance, if $G/H$ is a torus, then $\valC=\CX^*_\Q$ and
$\Colors=\emptyset$. So every strictly convex polyhedral cone $C$ of
full dimension is a colored cone $(C,\emptyset)$.
The face relation among colored cones is defined as
$$
(C_1,\CF_1)< (C_2,\CF_2)
\;:\iff\;
C_1 \mbox{ is a face of }C_2 \mbox{ and }
\CF_1=\CF_2\cap\rho^{-1}(C_1).
$$
A finite, non-empty set $\cf$ of colored cones forms a {\em colored fan}
if first,
every face of $(C,\CF)\in \cf$ belongs to $\cf$,
and second, every $v\in\valC$
belongs to the interior of at most one cone $C$ with
$(C,\CF)\in \cf$.
This implies in particular that the intersection of two cones inside
$\valC$ is a common face of both.
\\[1ex]
Every spherical variety $X$ with an open dense orbit $G/H$ gives rise
to a colored fan $\cfX$.
Namely, $X$ can be covered by a finite number of simple  spherical varieties.
Recall that a spherical variety is {\em simple} if it contains a unique closed $G$-orbit.
Every simple spherical variety $X_0$ defines a colored cone $(C(X_0),\CF(X_0))$ as follows.
The set $\CF(X_0)$ is the set of all colors whose closure in $X_0$ contains the closed orbit.
The cone $C(X_0)$ is spanned by
$$
C(X_0) = \langle
\rho(\CF(X_0)),\;\rho(D_1), \ldots, \rho(D_r)\rangle
$$
where $D_1$, \ldots, $D_r$ are irreducible $G$-invariant divisors on $X_0$.
The colored fan $\cfX$ is then
the union of colored cones $(C(X_0),\CF(X_0))$ over all simple
$G$--invariant subvarieties $X_0\subset X$.
By results of \cite{LV}, the map $X\mapsto \cfX$ is a bijection
between isomorphism classes of spherical varieties
with an open dense orbit $G/H$ and colored fans in $\CX^*(G/H)_\Q$.
\\[1ex]
By definition of $\cfX$, there is a bijective correspondence between $G$--orbits in $X$ and colored cones in $\cfX$.
Closed orbits correspond to maximal colored cones.
Some further properties of $X$ can be read from the colored fan $\cfX$,
e.g. $X$ is complete if and only if the support $|\cfX|$ of the colored fan contains the valuation cone.
\\[1.0ex]
For two $G$--equivariant embeddings $X$ and $X'$ of $G/H$,
we say that $X$ {\em dominates} $X'$ if there exists a $G$--equivariant
morphism $X\to X'$.
This can also be read from the colored fans $\cfX$ and $\cfXprime$.
Namely, $X$ dominates $X'$ if the fan $\cfX$ fits into the
fan $\cfXprime$, that is, for every colored $(C,\CF)$ of $X$
there exists a colored cone $(C',\CF')$ of $\cfXprime$,
such that $C\subseteq C'$ and $\CF\subseteq\CF'$.
Note that we use the word ``dominate'' here
even for non-proper or non-surjective maps.

\subsection{Toroidal embeddings}
\label{subsec:toroidal}
There is a special class of spherical embeddings, namely, toroidal
embeddings, whose geometric properties are easier to study.

\begin{definition}
A $G$-equivariant embedding $X$ of $G/H$ is {\em toroidal} if it has no
colors, that is, the closure in $X$ of any color of $G/H$ does not contain
a closed $G$--orbit.
\end{definition}

In other words, all of the colored cones in the colored fan of $Y$ have
empty sets of colors. In particular, any toric variety is toroidal.
Wonderful varieties are toroidal.
Any embedding $X$ is dominated by a smallest toroidal one $\Xtor$
obtained by replacing every colored cone $(C,\CF)$ by the (un-)colored
cone $(C\cap\valC,\emptyset)$.
\\[1ex]
Smooth toroidal embeddings (also called {\em regular}) are the closest
relatives of smooth toric varieties.
If they are complete, then they
can also be covered by affine charts $\A^n$ (where $n=\dim G/H$)
so that the closures of codimension one $G$--orbits intersect each chart
by coordinate hyperplanes $D_1$,\dots, $D_r$
(where $r=\rank G/H$), and all intersections $\bigcap_{i\in I} D_i$ for
$I\subset\{1,\ldots,r\}$ are exactly the intersections of $\A^n$
with the closures of $G$--orbits.
These affine charts are translates of those defined in Proposition \ref{prop-covering}.
There is a more general notion of log-homogeneous varieties introduced in
\cite{BrionLogHom}.
From a geometric viewpoint, these are the nicest possible varieties among
all varieties with an almost homogeneous action of an algebraic group.
It turns out that if the group is linear,
then log-homogeneous varieties are exactly smooth toroidal varieties
(in particular, they are spherical), cf.\ \cite[Section 4]{Bremen}.
From a geometric point of view it is thus sometimes more natural to consider
toroidal embeddings than arbitrary spherical varieties.
\\[1ex]
If the valuation cone is strictly convex (hence, simplicial) then
there is a special compactification $\wondY\!_{\valC}$ of $G/H$ whose
colored fan is given by the valuation cone and all of its faces.
This compactification is called {\em standard}.
Note that the valuation cone is strictly convex if and only
if $N_G(H)/H$ is finite, cf.\ \cite[Theorem 7.1]{Knop} or
\cite[(4.4), Proposition 1]{BrionFrench}.
Those subgroups $H$ are called {\em sober}.
The standard compactification $\wondY\!_{\valC}$ is a unique both
simple and toroidal compactification of $X$, and hence, the only candidate for a
wonderful compactification of $G/H$.
To determine when $\wondY\!_{\valC}$ is wonderful is a difficult problem
which is not yet completely solved.
It is known that if $N_G(H)/H$ acts on the set of colors effectively
(e.g.\ $N_G(H)=H$) then $\wondY\!_{\valC}$
is wonderful \cite[Theorem 30.1]{timashev1}.
The converse is not true.
Note that $\wondY\!_{\valC}$ dominates any simple compactification
of $G/H$ and is dominated by any toroidal compactification of $G/H$.

\subsection{Horospherical varieties}
\label{subsec:horospherical}
We shortly discuss properties of horospherical varieties since they
will play a major role in Section \ref{sec:examples}.
For more details
on this subject the reader may consult \cite[Chapter 29]{timashev1} or
\cite{pasquier1}.

\begin{definition}
A closed subgroup $H \subset G$
is called \emph{horospherical} if it contains the unipotent radical
of some Borel subgroup
$\kBalt$.
In this case, $G/H$ is said to be a horospherical homogeneous
space. Analogously, we call a normal $G$-variety $X$ horospherical if it
contains an open $G$-orbit which is isomorphic to a horospherical
homogeneous space.
\end{definition}
In particular,
tori and complete rational homogeneous spaces are horospherical.

It follows from the Bruhat decomposition of $G$ that
horospherical varieties are also spherical, namely, the opposite Borel subgroup $B$ has an open orbit on $G/H$.
Moreover, for any horospherical subgroup $H \subset G$ there exists a unique
parabolic subgroup $P \supset \kBalt$ such that $H$ is the intersection of
the kernels of the characters of $P$.
Furthermore, we have $P = N_G(H)$.
In more detail, given $H \subset G$ and the maximal torus $T=B\cap\kBalt$,
there exists a subset $I$ of the
simple roots of $G$ such that $P$ is generated by $W_I$ and $\kBalt$, i.e.\
$P = P_I$. Here, $W_I$ denotes the subgroup of the Weyl group
$W = N_G(T)/T$ which is generated by the reflections associated to
the elements of $I$.
Even more, the lattice $\CX(G/H)$ can be identified
with the set of characters of $P$ whose restrictions to $H$
are trivial.\\[1ex]
It turns out that any horospherical homogeneous space $G/H$ is the total
space of a torus fibration over the flag variety $G/P$ where the fiber
$P/H$ equals the torus $\rightT$ with character lattice $\CX(G/H)$.
This fibration can be extended to the toroidal case, i.e.\ any toroidal
horospherical variety is of the form $G \times^P Y$ where
$Y\supseteq \rightT$ is a toric variety.
This feature may be regarded
as the main reason for why horospherical varieties are more amenable
to specific calculations than arbitrary spherical varieties.
Note also that $\CX=\CX_\kB$ and  $\valC = \CX(G/H)^*_\Q$ for a horospherical embedding
$G/H \subset X$ which ensures that its colored fan is an
honest polyhedral fan.

\begin{example} \label{ex.SL_2/U} The simplest example of a non-compact horospherical homogeneous space is $\SL_2/U$ with
\[
U = \left( \begin{array}{cc} 1 & 0 \\ * & 1 \end{array}\right) \subset
\left(\begin{array}{cc} * & 0 \\ * & * \end{array}\right) = \kBalt.
\]
Here, $P = \kBalt$, and $I = \emptyset$. The
homogeneous space $\SL_2/U$ is isomorphic to $\C^2 \setminus \{0\}$, and
$\Sl_2/P = \bP^1$ with the usual projection. Apart from the trivial embedding
$\bC^2 \setminus \{0\}$ of $\SL_2/U$ there are five nontrivial ones,
see Figure \ref{fig:CFembSL2/U} for their colored fans: \\[-1ex]

\begin{center}
\begin{tabular}{ccccccccc}
(a) $\Bl_{0}\bC^2$ & & (b) $\bC^2$ & & (c) $\bP^2 \setminus \{0\}$ & &
(d) $\Bl_{0}\bP^2$ & & (e) $\bP^2$
\end{tabular}
\end{center}

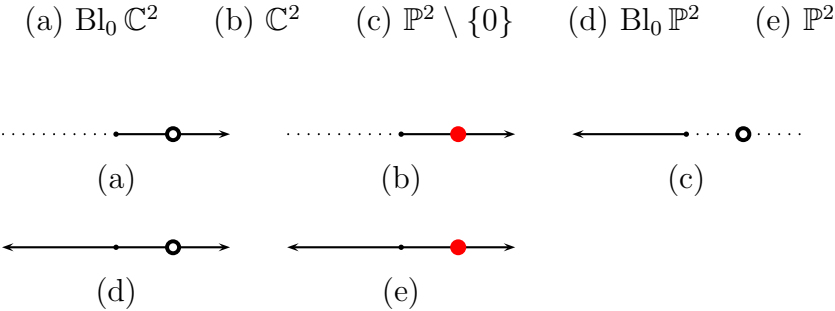
\begin{figure}[h]
\psset{unit=0.75cm}
\begin{pspicture}(0,-4.5)(15,0)

\psline[linestyle=dotted]{-}(0,-1)(2,-1)
\psline{->}(2,-1)(4,-1)
\qdisk(2,-1){1pt}
\qdisk(3,-1){3pt}
\psset{linecolor=white}
\qdisk(3,-1){1.5pt}
\psset{linecolor=black}
\uput[270](2,-1.3){(a)}

\psline[linestyle=dotted]{-}(5,-1)(7,-1)
\psline{->}(7,-1)(9,-1)
\qdisk(7,-1){1pt}
\psset{linecolor=red}
\qdisk(8,-1){3pt}
\psset{linecolor=black}
\uput[270](7,-1.3){(b)}

\psline[linestyle=dotted]{-}(12,-1)(14,-1)
\psline{<-}(10,-1)(12,-1)
\qdisk(12,-1){1pt}
\qdisk(13,-1){3pt}
\psset{linecolor=white}
\qdisk(13,-1){1.5pt}
\psset{linecolor=black}
\uput[270](12,-1.3){(c)}

\psline{<->}(0,-3)(4,-3)
\qdisk(2,-3){1pt}
\qdisk(3,-3){3pt}
\psset{linecolor=white}
\qdisk(3,-3){1.5pt}
\psset{linecolor=black}
\uput[270](2,-3.3){(d)}

\psline{<->}(5,-3)(9,-3)
\qdisk(7,-3){1pt}
\psset{linecolor=red}
\qdisk(8,-3){3pt}
\psset{linecolor=black}
\uput[270](7,-3.3){(e)}

\end{pspicture}
\caption{Colored fans associated to embeddings of $\SL_2/U$.}
\label{fig:CFembSL2/U}
\end{figure}

Note that the one-dimensional torus $\kN/H= H\cdot N_G^\circ(H)/H=P/U$
acts on these embeddings by scalar matrices.
Comparing (a)  with the first example in (\ref{sub:blowup}) for $n=1$ and and (b) with the second example in
(\ref{sub_Ex3actions}) we can check that Theorem \ref{thm:main} holds for the horospherical embeddings
$\Bl_{0}\bC^2$ and $\bC^2$, respectively.
\end{example}

\subsection{Spherical varieties of minimal rank}
\label{subsec:minimal_rank}
In what follows, we will mostly deal with spherical varieties of minimal rank.
This class of varieties include horospherical varieties and embeddings of $G$ (viewed as a homogeneous space under $G\times G$ acting by left and right multiplication).
In general, the rank ${\rm rk}(G/H)$ of a spherical homogeneous space $G/H$ satisfies the inequality
$${\rm rk}(G/H)\ge{\rm rk}(G)- {\rm rk}(H),$$
where ${\rm rk}(G)$ and ${\rm rk}(H)$ denote the ranks of the groups $G$ and $H$.
\begin{definition} \label{def-rank}
A spherical homogeneous space $G/H$ is {\em of minimal rank} if
$${\rm rk}(G/H)={\rm rk}(G)- {\rm rk}(H).$$
\end{definition}
Spherical homogeneous spaces of minimal rank are classified in \cite{Ressayre}.
They can be characterized by the following property:
\begin{proposition}\cite[Proposition 2.4]{Ressayre}\label{prop-Ressayre}
Let $T\subset G$ be a maximal torus.
A spherical homogenous space $G/H$ is of minimal rank if and only if for any toroidal embedding $X$ of $G/H$, the $T$-fixed points of $X$ lie in closed orbits.
\end{proposition}
This property is important for us because it yields a covering of $X$ by $T$-stable open affine subvarieties that can be explicitly described using the colored fan of $X$ and the Weyl group of $G$.
We now describe this covering.
First, let $X(C,\CFC)\subset X$ be the simple spherical embedding corresponding to a maximal colored cone $(C,\CFC)\in\cfX$.
Then the open $B$-invariant subvariety
$$
\wh{X}_{\id}(C,\CFC)=X(C,\CFC) \setminus
\bigcup_{D \in {\Colors(G/H)\setminus\CFC}}\hspace{-1.2em}\overline{D}
$$
is affine by \cite[Theorem~2.1]{Knop}.
For $w\in W$, put $\wh{X}_{w}(C,\CFC):=w\wh{X}_{\id}(C,\CFC)$.
This is a $T$-invariant subvariety (we assume that $T\subset B$).
\begin{proposition} \label{prop-covering}
If $X$ is a spherical variety of minimal rank with the colored
fan $\cfX$ then
$$X=\bigcup_{(C,\CFC)\in\cfX\hspace{-0.2em},\hspace{0.2em} w \in W}
\hspace{-1.2em}\wh{X}_w(C,\CFC),$$
where $(C,\CFC)$ runs through the maximal colored cones in $\cfX$.
This yields a covering of $X$ by $T$-invariant open affine subvarieties labeled
by the maximal colored cones and the elements of the Weyl group of $G$.
\end{proposition}
\begin{proof} W.l.o.g. assume that $X$ is complete.
The variety
$$X':=\bigcup_{(C,\CFC)\in\cfX, w \in W}\wh{X}_w(C,\CFC)$$
is open and $T$-invariant.
The intersection $X'\cap\cO$ with any closed $G$-orbit $\cO\subset X$ contains all $T$-fixed points in $\cO$.
Indeed, if $\cO$ corresponds to a maximal cone $C$ then $\cO\cap \wh{X}_{\id}(C,\CFC)$ is the open dense $B$-orbit in $\cO$ by \cite[Theorem 2.1 (c)]{Knop}.
Since $\cO$ is isomorphic to a flag variety, the open dense $B$-orbit in $\cO$ contains a unique $T$-fixed point $x_0$
and all other $T$-fixed points in $\cO$ have form $wx_0$ for $w\in W$.

Hence, if $X$ is toroidal, then $X'$ contains all $T$-fixed points by Proposition \ref{prop-Ressayre}.
It follows that the complement $X\setminus X'$ is empty.
Indeed, every nonempty closed $T$-invariant subvariety of $X$ must contain a $T$-fixed point by Borel's fixed point theorem.

The statement for a non-toroidal $X$ follows at once from the corresponding statement for a toroidal resolution of $X$.
\end{proof}

Below is an example of a complete spherical space (not of minimal rank) for which Proposition \ref{prop-covering} does not hold (this example was suggested to us by the referee).

\begin{example} \label{ex.P^1_P^1} Let $G=GL_2$.
Consider $X=\bP^1\times\bP^1$ as a $G$--variety under the diagonal action of $G$.
Then  Proposition \ref{prop-covering} yields only two charts out of four standard affine charts
for $\bP^1\times\bP^1$.
\end{example}
We will also need the following description of $B$-orbits in $G/H$.
For more details see \cite{Ressayre,BrionOrbits}.
Let $W_G$ and $W_H$ denote the Weyl groups of $G$ and $H$, respectively.
For arbitrary spherical homogeneous spaces, F.Knop defined an action of $W_G$ on $B$-orbits in $G/H$.

\begin{proposition}\cite[Propositions 2.1, 2.2]{Ressayre}\label{prop-Ressayre2}
The homogeneous space $G/H$ is of minimal rank if and
only if the $W_G$-action on its $B$-orbits is transitive.
Then there exists a unique closed $B$-orbit, and its stabilizer in $W_G$
is isomorphic to $W_H$.
\end{proposition}

This description generalizes the description of Schubert cells in flag varieties to all spherical varieties of minimal rank.
However, no such description is known for more general spherical varieties (cf. Example \ref{ex.P^1_P^1}).

As in the case of flag varieties, there are two competing ways to label $B$-orbits by elements of $W_G/W_H$: 
the identity element of $W_G$ labels either the closed (minimal) $B$--orbit or the open (maximal) $B$--orbit.
We will use the former labeling, which we now define more precisely.
Let $\cO_{\id}$ denote the closed $B$-orbit in $G/H$.
Note that this is not the $B$--orbit through the identity coset of $G/H$, since the latter is open by our choice of $B$.
In what follows, we denote by $\cO_{u}$ (where $u\in W_G$) the $B$-orbit obtained from $\cO_{\id}$ by the action of $u$.
The action of $W_G$ on $B$-orbits is related to the usual left action of $W_G$ on $G/H$ as follows.
If $\alpha$ is a simple root and $P_\alpha$ and $s_\alpha$ are the associated minimal parabolic subgroup and the simple reflection,
respectively, then two cases can occur:
\begin{enumerate}
\item $P_\alpha\cO_u=\cO_u$, then $\cO_u=\cO_{s_\alpha u}$ and $s_\alpha(\cO_u)=\cO_u$;

\item $P_\alpha\cO_u=\cO_u\sqcup\cO_{s_\alpha u}$ and the natural map $P_\alpha\times^{B}\cO_u\to P_\alpha\cO_u$ is birational.
If $\dim\cO_u<\dim\cO_{s_\alpha u}$ (that is, $\dim\cO_u+1=\dim\cO_{s_\alpha u}$) then
$s_\alpha(\cO_u)\subset \cO_{s_\alpha u}$.
\end{enumerate}

There is an analog of the weak Bruhat order on $B$-orbits in $G/H$, namely, $\cO_u\prec\cO_v$ if  $\dim\cO_{u}+1=\dim\cO_v$ and
$v=s_\alpha u$ for a simple root $\alpha$.
In particular, $\cO_u\subset\ovl{\cO_v}$.
In what follows, $\Gamma(G/H)$ denotes the oriented graph associated with the relation $\prec$, that is,
the vertices of $\Gamma(G/H)$ are $B$-orbits and two vertices $\cO_u$ and
$\cO_v$ are connected by an oriented edge labeled by $\alpha$ if
$\cO_u\prec\cO_v$ and $v=s_\alpha u$.
The graph $\Gamma(G/H)$ has a unique maximal element (the open dense $B$-orbit in $G/H$) and a unique minimal element (the closed orbit $\cO_{\id}$).
In particular, there exists a strictly increasing path from $\cO_{\id}$ to any other orbit $\cO_u$, and the number of edges in such a path is equal to $(\dim\cO_u-\dim\cO_{\id})$.
Similarly, there exists a strictly decreasing path from the maximal orbit to $\cO_u$ and the number of edges in such a path is equal to ${\rm codim}~\cO_u$.

\begin{remark}\label{rem-orbits} The colors of $G/H$ are closures of
codimension one $B$-orbits.
In the above notation, if
$\cO_{u_0}=(G/H)\setminus\bigcup_{\colH\in\Colors}\colH$ is the maximal
$B$-orbit in $G/H$
which has stabilizer $W_H\subseteq W_G$,
then the colors coincide with the closures of
$B$-orbits $\cO_{s_\alpha u_0}$ for simple reflections
$s_\alpha\in W_G\setminus W_H$.
Put $D(\alpha):=\ovl{\cO_{s_\alpha u_0}}$.
Then $s_\alpha(D(\alpha))\ne D$ for any color $D$ since $s_\alpha(\cO_{s_\alpha u_0})\subset\cO_{u_0}$.
On the other hand, if $D(\beta)\ne D(\alpha)$
is any other color, then $s_\alpha D(\beta)=D(\beta)$.
Indeed, $s_\alpha u_0\neq s_\beta u_0$ implies
$s_\alpha(s_\beta u_0)\neq u_0$ in $W_G/W_H$, i.e.\
$\dim \cO_{s_\alpha s_\beta u_0} < \dim \cO_{s_\beta u_0}$ or
$\cO_{s_\alpha s_\beta u_0}=\cO_{s_\beta u_0}$.
In either case, $\cO_{s_\alpha s_\beta u_0}\subseteq \ko{\cO_{s_\beta u_0}}=D(\beta)$.
Then,
$s_\alpha \cO_{s_\beta u_0}\subseteq
P_\alpha \cO_{s_\beta u_0} = \cO_{s_\beta u_0}\cup
\cO_{s_\alpha s_\beta u_0}\subseteq D(\beta)$.
The same argument shows that if $s_\alpha\in W_H$ then
$s_\alpha D(\beta)=D(\beta)$, too.
\end{remark}

We will also need the following result describing the orbits inside a given color.

\begin{lemma}\label{l.inclusion}
Let $\cO_u$ be a $B$-orbit in $G/H$ such that an increasing path $(\alpha_{i_1},\ldots, \alpha_{i_\ell})$
from $\cO_u$ to $\cO_{u_0}$ contains
an edge labeled by $\alpha$.
Then $\cO_u\subset D(\alpha)$.
\end{lemma}
\begin{proof}
Proceed by induction on $\ell={\rm codim}~\cO_u$.
If $\ell=1$, we have even  $\ovl{\cO_u}= \colH$.
Choose $j$ such that $\alpha_{i_j}=\alpha$.
By replacing $u$ with $s_{\alpha_{i_{j-1}}}\cdots s_{\alpha_{i_1}}u$ we may assume that $j=1$.
Put $\beta=\alpha_{i_{2}}$ and $v=s_\alpha s_\beta s_\alpha u$.
Then either $\cO_v\prec\cO_{s_\alpha v}=\cO_{s_\beta s_\alpha u}$
(and $\dim \cO_v=\dim\cO_u+1$) or
$\cO_{s_\beta s_\alpha u}\prec\cO_v$ (and $\dim\cO_v=\dim \cO_u+3$).
In the latter case,  $\cO_{s_\beta s_\alpha u}$
satisfies the induction hypothesis for $(l-2)$
and $\cO_u\subset\ovl{\cO_{s_\beta s_\alpha u}}$.
In the former case, $\cO_{v}$ satisfies the induction hypothesis
for $(l-1)$ so it remains to show that $\cO_u\subset\ovl{\cO_{v}}$.

The braid relation
$(s_{\alpha}s_{\beta})^{m(\alpha,\beta)}=\id$ within $W_G$
yields a closed path $\Pi$ in
$\Gamma(G/H)$ that goes through $\cO_u$ and $\cO_v$,
Note that any such path contains a unique {\em locally maximal} vertex $\cO_{w}$, that is,
$\cO_{s_\alpha w}\prec\cO_{w}$ and $\cO_{s_\beta w}\prec\cO_{w}$.
Indeed, $\ovl{\cO_w}$ is both $P_\alpha$- and $P_\beta$-invariant in this case, hence, $\ovl{\cO_w}$ contains
all the other orbits in the path.
This implies that $\Pi$ also has a unique minimal vertex $\cO_{{v_0}}$ with respect to $\prec$.
Indeed, let $\cO_{w_1}\in \Pi$ be a {\em locally minimal} vertex, i.e., $\cO_{s_\alpha w_1}\succ\cO_{w_1}$
and $\cO_{s_\beta w_1}\succ\cO_{w_1}$.
Let $\cO_{w_2}\in P$ be another locally minimal vertex.
Unless $\cO_{w_1}=\cO_{w_2}$, we can represent $\Pi$ as the union of two distinct paths $\Pi_1$ and $\Pi_2$ where
$\Pi_1$ goes from $\cO_{w_1}$ to $\cO_{w_2}$, $\Pi_2$ goes from $\cO_{w_2}$ to $\cO_{w_1}$, and $\Pi_1$ and $\Pi_2$ intersect
only at the endpoints.
For $i=1,2$, the first edge in $\Pi_i$ goes upwards and the last edge goes downwards, hence, $\Pi_i$ contains a locally maximal
vertex.
This contradicts to the uniqueness of a locally maximal vertex.

Let $\Pi_u$ and $\Pi_v$ be strictly increasing paths inside the loop from $\cO_{v_0}$
to $\cO_u$ and $\cO_v$, respectively.
The labels on both paths look like
$(\ldots,\alpha,\beta,\alpha,\beta,\ldots)$.
Denote by $P$ the parabolic group corresponding to the path
$\Pi_u$.
Moreover, we may assume that $\Pi_v=(s_\beta,\Pi_u)$.
Then, on the one hand, $P_\beta \cO_{{v_0}}=\cO_{{v_0}}\disjcup\cO_{s_\beta {v_0}}$.
On the other hand, $\cO_u$ is the open orbit in $P\cdot \cO_{{v_0}}$
and $\cO_v$ is the open orbit in $P\cdot \cO_{s_\beta {v_0}}$.
Thus, $\cO_u\subseteq P\cdot \cO_{{v_0}}\subseteq
P\cdot P_\beta\cO_{{v_0}}\subseteq P\cdot \ko{\cO_{s_\beta
{v_0}}}\subseteq\ko{\cO_v}$.
\end{proof}

\section{Towards the toroidal case}
\label{sec:sphVT}

\subsection{A new torus action}
\label{sub:newTorus}
We now introduce a torus action on embeddings of the spherical
homogeneous space $G/H$. Note that this won't be the restriction of the
$G$-action to a maximal torus $\leftT\subseteq G$. Instead, we use the
fact that $N_G(H)/H$ is a subgroup of a torus
\cite[Proposition 5.2]{Bremen}.
In particular, if we
put $\kN= H\cdot N_G^\circ(H)$, then $\rightT:= \kN/H$ is a torus, too.
Note that the group $\kN$ is the smallest sober subgroup that contains $H$ \cite[Lemma 30.1]{timashev1}.
The maximal linear subspace contained in the valuation cone $\valC$ has
dimension $\dim \rightT$ \cite[Theorem 7.1]{Knop}.
Note that $\rightT$ acts on $G/H$ from the right
and hence commutes with the left action of $G$.

\begin{lemma}
\label{lem-extendTaction}
The right action of $\rightT$ on $G/H$ extends to any $G$--equivariant
embedding of $G/H$.
\end{lemma}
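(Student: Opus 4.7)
The plan is to build the $\rightT$-action on $X$ in four steps: define pointwise automorphisms of $G/H$ coming from $\rightT$, check that they preserve the colored fan, extend via Luna--Vust, and finally promote the family of extensions to an algebraic morphism.

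First, for each $t \in \rightT = \kN/H$, I pick a lift $n \in \kN$ and set $\phi_t(gH) := gnH$; since $\kN$ normalizes $H$, this is a well-defined $G$-equivariant automorphism of $G/H$. Second, I claim that each $\phi_t$ acts trivially on both the finite set $\Colors(G/H)$ and the weight lattice $\CX(G/H)$. The action on $\Colors(G/H)$ is trivial because $\rightT$ is connected and acts continuously on a finite set. For the weight lattice, sphericity forces every $B$-weight space in $\C(G/H)^{(B)}$ to be one-dimensional; since $\phi_t$ commutes with $B$, the pullback $\phi_t^*$ preserves each such weight space, acts on it by a scalar, and therefore fixes the corresponding weight. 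Consequently, $\phi_t$ preserves every colored cone of $\cfX$, regardless of the particular embedding $X$.

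Third, I invoke the Luna--Vust classification \cite{LV}: viewing $X$ as a $G$-embedding of $G/H$ first via the identity and then via $\phi_t$ yields two $G$-embeddings with the same colored fan, hence $G$-equivariantly isomorphic. The resulting isomorphism restricts to $\phi_t$ on the open orbit and is thus the desired extension $\til\phi_t \colon X \to X$. Uniqueness of the extension forces $\til\phi_{t_1 t_2} = \til\phi_{t_1}\circ\til\phi_{t_2}$, so one obtains at least a set-theoretic action of $\rightT$ on $X$.

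The main obstacle is the fourth step: promoting this pointwise action to a morphism $a \colon \rightT \times X \to X$. I would pass to a $G$-equivariant completion $\ovl{X}$ of $X$, constructed by completing $\cfX$ inside the valuation cone, and then consider the closure $\Gamma \subseteq \rightT \times \ovl{X} \times \ovl{X}$ of the graph of the regular map $\rightT \times (G/H) \to \ovl{X}$ induced by $a$. The projection $\Gamma \to \rightT \times \ovl{X}$ is proper (as $\ovl{X}$ is complete), birational, and set-theoretically bijective: for each fixed $t$ the corresponding fiber is, by irreducibility, the graph of the morphism $\til\phi_t$ obtained in step three. Zariski's main theorem, combined with the normality of $\rightT \times \ovl{X}$, then identifies the projection with an isomorphism, so $a$ is a morphism on $\rightT \times \ovl{X}$. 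Since $\rightT$ fixes every colored cone, the open subvariety $X \subseteq \ovl{X}$ is $\rightT$-stable, and restriction yields the desired extension. Alternatively, once the colored fan has been shown to be $\rightT$-stable in step two, one can appeal to a general extension result for $\Aut^G$-actions on spherical embeddings from the work of Brion or Knop.
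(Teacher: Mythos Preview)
Your proof is correct in outline, but it takes a different route from the paper's, and your fourth step is doing more work than necessary.

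The paper avoids the pointwise-to-algebraic promotion entirely by enlarging the acting group at the outset. It sets $\wt G := G \times \rightT$ and observes that $G/H$ is simultaneously the $\wt G$-homogeneous space $\wt G/\wt H$ with $\wt H := \{(th,t)\kst t\in\rightT,\,h\in H\}$. The heart of the argument is then exactly your step~2: the colors and weight lattice of $\wt G/\wt H$ (with respect to $\wt B = B\times\rightT$) coincide with those of $G/H$, so the two sets of colored fans are in canonical bijection. Luna--Vust, applied \emph{once} to the $\wt G$-space, then equips every $G$-embedding $X$ with a $\wt G$-action, and the restriction to $\{e\}\times\rightT$ is the desired algebraic $\rightT$-action. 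No graph closure, no Zariski's main theorem.

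Your approach instead applies Luna--Vust separately for each $t$ and then has to glue. The graph-closure argument in step~4 is essentially sound, but the sentence ``by irreducibility, the graph of the morphism $\til\phi_t$'' is not quite a proof: the fibre $\Gamma_t$ of $\Gamma\to\rightT$ need not be irreducible, and you have to rule out extra components over the boundary. This can be done (e.g.\ by a dimension count plus the fact that each $G_t$ already surjects onto $\ovl{X}$ and $\Gamma$ is irreducible of dimension $\dim\rightT+\dim\ovl{X}$), but it is exactly the kind of bookkeeping the paper's group-enlargement trick sidesteps. Your closing remark---appealing to a general extension result for $\Aut^G$ acting on spherical embeddings---is in fact the paper's argument in disguise.
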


\begin{proof}
The group $\wt G:=G\times \rightT$ acts on $G/H$ by left and right
multiplications as
$$
(g,t):x\mapsto gxt^{-1}
\hspace{1em}
(g\in G,\; t\in \rightT,\; x\in G/H).
$$
Hence, $G/H$ (so far only being considered as a homogeneous
$G$-space) may also be regarded as the homogeneous  $\wt G$-space
$\wt G/\wt H$,  where
$\wt H:=\{(th,t)\kst t\in \rightT, \,h\in H\}\cong H\times \rightT$.
Recall that we fixed a Borel subgroup $\kB\subset G$ such that
$\kB\cdot H$ is open and dense in $G$. It follows that
$N_G(H)$ is the stabilizer of $\kB H$ from the right, cf.\
\cite[Th\'{e}or\`{e}me 4.3(iii)]{BrionFrench}. In particular,
we obtain that $\kB H\supset \kN$ (indeed, $bH\cdot h'=bh'H=bb_1h_1H\in \kB H$).
Note that $\wt B:=\kB\times \rightT\subset \wt G$ is a
Borel subgroup in $\wt G$.
\\[0.5ex]
We now show that $G/H$ and $\wt G/\wt H$ have the same colors and
isomorphic weight lattices.
Indeed, the open $\kB$--orbit in $G/H$ is $\rightT$-invariant
since $\kB H\supseteq \kN$; hence,
any $\kB$--invariant irreducible divisor in $G/H$ is also
$\kB\times \rightT$--invariant.
Next, since the actions of $\kB\subseteq G$ and $\rightT$ commute,
each $\kB$--eigenvector $f\in\C(G/H)^{(\kB)}_\chi$ remains in this
one-dimensional space after applying the $\rightT$-action.
Hence, $f$ becomes a $\kB\times \rightT$--eigenvector of some weight
$\wt\chi\in\CX_{\wt B}$ lifting $\chi\in\CX_\kB$.
\\[0.5ex]
Since the resulting dual isomorphism
$\CX^*(G/H)\stackrel{\sim}{\rightarrow}\CX^*(\wt G/\wt H)$
is compatible with the identification
$\Colors(G/H)=\Colors(\wt G/\wt H)$ we stated before,
we obtain a bijection between the sets of colored fans
for $G/H$ and $\wt G/\wt H$, respectively.
Thus, every $G$--equivariant embedding of $G/H$ extends to
a $\wt G$--equivariant one of $\wt G/\wt H$.
\end{proof}

Since $\kN$ is sober, the homogeneous space $G/\kN$ admits the standard compactification $\wondY\!_{\valCN}$ (see Section \ref{subsec:toroidal}), which is wonderful in many cases of interest.
Moreover, blow-ups of the latter
will serve as base varieties for polyhedral divisors describing
spherical embeddings of $G/H$ as $\rightT$--varieties.

\subsection{Comparing $H$ and $\kN$}
\label{sub:comparison}
Let $M$ denote the character lattice of $\rightT$, and $N$ its dual,
i.e.\ the lattice of one-parameter subgroups of $\rightT$.

\begin{lemma}
\label{lem-multF}
Each $f\in \C(G/H)^{(\kB)}$ with $f(1)=1$ is partially multiplicative,
i.e.\ it satisfies $f(gh')=f(g)\cdot f(h')$ for $g\in G$ and $h'\in\kN$.
In particular, restriction to $\kN$ gives the vertical homomorphism
$\CX(G/H)\to M$
in the diagram
$$
\xymatrix@!0@C=8em@R=4.5ex{
& \CX(G/H)\; \ar@{^(->}[r] \ar@{->>}[dd]
& \CX_\kB\hspace{2em}
& \mbox{\rm (by restriction to $B$)}\\
\C(G/H)^{(\kB)}_{1\mapsto 1} \ar[ur]^-{\sim}
                           \ar[dr] \\
& M  \ar@{=}[r]
& \CX_{\rightT}\hspace{2em}
& \mbox{\rm (by restriction to $\kN$)}.
}
$$
\end{lemma}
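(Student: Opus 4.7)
The plan is to exploit the commuting left $B$-action and right $\rightT$-action on $G/H$, together with the one-dimensionality of $B$-semiinvariant eigenspaces on a spherical homogeneous space.

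First I would lift $f$ to a right $H$-invariant rational function on $G$. For $h' \in \kN$, right multiplication by $h'$ descends to $G/H$ because $\kN \subseteq N_G(H)$, and the induced translation depends only on $t := h' H \in \rightT$ (so the right $\rightT$-action of Lemma \ref{lem-extendTaction} acts on rational functions by $(t \cdot f)(g) := f(gh')$). Since right multiplication by $\kN$ and left multiplication by $B$ commute, the translate $t \cdot f$ is again a $B$-semiinvariant rational function on $G/H$ with the same weight $\chi = \chi(f)$ as $f$.

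Sphericality of $G/H$ guarantees that the $B$-eigenspace $\C(G/H)^{(B)}_\chi$ is one-dimensional. Therefore there is a unique scalar $\lambda(t) \in \C^*$ with $f(gh') = \lambda(t) f(g)$ as an identity of rational functions in $g$, and $t \mapsto \lambda(t)$ is a character of $\rightT$, i.e.\ an element of $M$. Setting $g = 1$ and using the normalization $f(1) = 1$ yields $\lambda(t) = f(h')$; substituting this back produces precisely the partial multiplicativity $f(gh') = f(g)\cdot f(h')$.

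Finally, for the diagram, I would observe that the assignment $f \mapsto \lambda$ just constructed is nothing but the restriction $f|_{\kN}$, descended along $\kN \twoheadrightarrow \kN/H = \rightT$: indeed $\lambda(h'H) = f(h')$, and well-definedness on $\kN/H$ is equivalent to right $H$-invariance of $f$, which in turn follows from the partial multiplicativity applied with $h' \in H$ (where $f(h')= f(1) = 1$). Since multiplying two normalized eigenfunctions $f_1,f_2$ multiplies both their weights $\chi(f_i)$ and their characters $\lambda_i$, the map descends to a group homomorphism $\CX(G/H) \to M$ which is the vertical arrow in the diagram. The one place that does real work is the one-dimensionality of $B$-eigenspaces on a spherical homogeneous space; once this is invoked, everything else is bookkeeping with the two commuting actions.
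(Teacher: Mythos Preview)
Your proposal is correct and follows essentially the same argument as the paper: define the right translate $f'(g)=f(gh')$, observe that it lies in the same one-dimensional $B$-eigenspace of $\C(G/H)$ (using that $\kN$ normalizes $H$), and evaluate at $g=1$ to identify the scalar as $f(h')$. You are more explicit than the paper about the character property of $\lambda$ and the construction of the map $\CX(G/H)\to M$, but the underlying idea is identical.
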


\begin{proof}
From Lemma \ref{lem-extendTaction} it
follows that $f$ is an eigenvector
of $\kB\times\rightT$ and hence of $\kB\times\kN$.
Alternatively, one has the following direct argument:
For each $h'\in\kN$ we define a new rational function
$f'(g):=f(gh')$. It is not hard to see that $f'$ and $f$ transform
in the same ways with respect to the left $B$-action.
Moreover, since $\kN$ normalizes $H$, we see that $f'$ also is
$H$-invariant (for the action from the right). Hence,
$f$ and $f'$ differ multiplicatively by a constant,
i.e.\ $f'(g)=f(g)\cdot f'(1)$.
\end{proof}

The following result is derived from the \emph{Tits fibration}
$\tits:G/H\to G/\kN$.

\begin{proposition}
\label{prop-lattices}
The dual of the vertical homomorphism from {\rm (Lemma \ref{lem-multF})}
fits into the
short exact sequence
$$
0 \to  N  \to \CX^*(G/H) \stackrel{p}{\longrightarrow} \CX^*(G/\kN)\to 0.
$$
The valuation cone $\valC$ of $G/H$ is the full preimage of the
{\rm (}strictly convex{\rm )}
valuation cone $\valCN$ of $G/\kN$ under $p_\Q$.
Moreover, there is a natural identification of colors
$\tits:\Colors(G/H)\stackrel{\sim}{\longrightarrow}\Colors(G/\kN)$
compatible with $p=\tits_*$.
\end{proposition}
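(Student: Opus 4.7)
My plan is to dualize an exact sequence of weight lattices arising from the Tits fibration $\tits: G/H \to G/\kN$, and then to transfer information about valuation cones and colors via $\tits$. The key geometric input is that $H$ is normal in $\kN$, so $\tits$ is a principal $\rightT$-bundle with $\rightT = \kN/H$.

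First I would establish the dual exact sequence
$$
0 \to \CX(G/\kN) \stackrel{\tits^*}{\longrightarrow} \CX(G/H) \stackrel{r}{\longrightarrow} M \to 0,
$$
where $r$ is the restriction-to-$\kN$ map of Lemma \ref{lem-multF}. Injectivity of $\tits^*$ is dominance of $\tits$. For exactness in the middle, $r(\chi)=0$ means $f_\chi|_{\kN}\equiv 1$, which by the partial multiplicativity of Lemma \ref{lem-multF} forces $f_\chi(gh')=f_\chi(g)$ for all $h'\in\kN$, so that $f_\chi$ descends to a $B$-semiinvariant on $G/\kN$. For the surjectivity of $r$ I would quote \cite[Th\'{e}or\`{e}me 4.3(ii)]{BrionFrench}; alternatively one can argue via the identification $\rightT\cong(B\cap\kN)/(B\cap H)$ on the open $B$-orbit and extend characters from $B\cap\kN$ to $B$. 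Dualizing yields the stated sequence with kernel $N$.

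For the valuation cone statement, every rational one-parameter subgroup of $\rightT$ acts on $G/H$ commuting with $G$, and so produces a $G$-invariant valuation whose image in $\CX^*(G/H)$ lies in $N$; this gives $N_\Q\subseteq\valC$. Conversely, any $G$-invariant valuation $v$ on $\C(G/H)$ restricts to the $\rightT$-invariant subfield $\C(G/\kN)=\C(G/H)^{\rightT}$, and by the very definition of $p$ this restriction is $p_\Q(v)\in\valCN$; combined with the kernel inclusion one obtains $\valC=p_\Q^{-1}(\valCN)$. Strict convexity of $\valCN$ follows from $\kN$ being sober, i.e.\ $N_G(\kN)/\kN$ being finite, cf.\ \cite[Lemma 30.1]{timashev1}.

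For the identification of colors, since $\rightT$ is connected and permutes the finite set $\Colors(G/H)$, every color $\colH$ is $\rightT$-stable and hence a union of $\tits$-fibers. As $\tits$ is a principal $\rightT$-bundle, $\colH\mapsto\tits(\colH)$ defines a bijection onto $\Colors(G/\kN)$ with inverse $\colN\mapsto\tits^{-1}(\colN)$. Compatibility $p(\rho_\colH)=\rho_{\tits(\colH)}$ reduces, via $p=\tits_*$, to the pullback identity $\ord_\colH(\tits^*f)=\ord_{\tits(\colH)}(f)$ for $f\in\C(G/\kN)^{(B)}$, which holds because $\tits$ is \'etale-locally trivial and $\colH$ is the reduced preimage of $\tits(\colH)$. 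The only step I expect to require real work is the surjectivity of $r$; for the self-contained variant one must realize each character of $\rightT$ by a concrete $B$-semiinvariant, which is most easily avoided by citing Brion, after which the remainder is principal-bundle bookkeeping.
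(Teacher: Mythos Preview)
Your argument tracks the paper's closely: both cite \cite[Th\'eor\`eme 4.3(ii)]{BrionFrench} for the exact sequence of weight lattices, and both deduce the bijection on colors (together with the compatibility $p(\rho_{\colH})=\rho_{\tits(\colH)}$) from the fact that $\tits$ is a locally trivial fibration with connected fiber $\rightT$. Your $\rightT$-stability argument for colors is just a rephrasing of the paper's observation that $\tits^{-1}(\colN)$ is irreducible.

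There is, however, a genuine gap in your treatment of the valuation cones. From $N_\Q\subseteq\valC$ and the restriction map $\valC\to\valCN$ you obtain only the inclusion $\valC\subseteq p_\Q^{-1}(\valCN)$, together with the fact that $\valC$ is $p_\Q$-saturated (so $\valC=p_\Q^{-1}(p_\Q(\valC))$). What is missing is the surjectivity $p_\Q(\valC)\supseteq\valCN$: you have not shown that every $G$-invariant valuation on $\C(G/\kN)$ lifts to a $G$-invariant valuation on $\C(G/H)$. An arbitrary extension of a valuation to the overfield $\C(G/H)$ need not be $G$-invariant, so ``combined with the kernel inclusion'' does not close the argument. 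The paper avoids this entirely by citing Brion again: both $\valC$ and $\valCN$ are cut out by the \emph{same} set of spherical roots (viewed in $\CX(G/\kN)\subseteq\CX(G/H)$), which gives $\valC=p_\Q^{-1}(\valCN)$ at once. If you want to keep a direct argument, you must supply the lifting step, for instance by invoking functoriality of valuation cones under dominant $G$-maps of spherical homogeneous spaces, or by explicitly lifting invariant valuations through the local product decomposition of the open $B$-orbit.
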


\begin{proof}
For the exactness of
$$
\;0 \to \CX(G/\kN) \to \CX(G/H) \to M \to 0
$$
see \cite[Th\'{e}or\`{e}me 4.3(ii)]{BrionFrench}.
Since $\kN\subseteq \kB H$, we know that $\tits^{-1}(\kB\kN/\kN)=\kB H/H$,
i.e.\ the two dense $\kB$-orbits correspond to each other via $\tits$.
The map $\tits$ is a locally trivial fibration with fiber
$\rightT$, cf.\ (\ref{subsec:TitsLocTriv}).
Hence, for every color $\colN\in\Colors(G/\kN)$ we know that
$\tits^{-1}(\colN)$ equals a single color
$\colH\in\Colors(G/H)$, and no colors from $G/H$ can
be sent via $\tits$ to a variety of larger codimension.
\\[0.5ex]
The equality of cones
$\valC=p^{-1}(\valCN)$ is also established
in \cite[4.3]{BrionFrench} by representing the valuation cone as the dual
of the cone generated by some negative roots,
cf.\ (\ref{sub:ColFans}), right before Definition \ref{def-colCone}.
Moreover, it is clear that for colors $\colH=\tits^{-1}(\colN)$ the associated
valuations $v_\colH$ and $v_\colN$ coincide on
$\C(G/\kN)$ understood as a subfield of $\C(G/H)$,
i.e.\ $p(\rho_{\colH})=\rho_{\colN}$ inside $\CX^*(G/\kN)$.
\end{proof}

Note that the previous exact sequences imply that
$G/H$ is of minimal rank if and only if $G/\kN$ has the same property.

\subsection{Introducing new fans}
\label{sub:comparisonX}
Let $X$, $X'$ be embeddings of $G/H$ and of $G/\kN$, respectively.
Generalizing a remark at the end of (\ref{sub:ColFans}) to the situation
of now two different subgroups $H$ and $\kN$,
we quote from \cite[Theorem 4.1]{Knop} that there exists a
$G$--equivariant map $X\to X'$ if and only if the fan  $\cfX$ maps to
the fan $\cfXprime$,
that is, for every colored $(C,\CF)$ of $X$
there exists a colored cone $(C',\CF')$ of $\cfXprime$,
such that $p(C)\subseteq C'$ and $p(\CF)\subseteq\CF'$.
For example, every toroidal embedding $X$ maps to the simple toroidal
compactification $\wondY=\wondY\!_{\valCN}$ already mentioned in
(\ref{sub:newTorus}).

\begin{definition}
\label{def-newFans}
Let $\cfX$ denote the colored fan which is associated with the
$G/H$-embedding $X$. Now we define the following ``ordinary" fans:
\begin{enumerate}
\item[(i)]
$\;\fanX:=\{C\cap\valC\kst (C,\CF)\in \cfX\}$
is called the underlying \emph{uncolored} fan.
\vspace{0.5ex}
\item[(ii)]
Let $\fanY=p(\fanX)$ denote the \emph{image fan} of $\fanX$ via $p$,
i.e.\ the
coarsest subdivision of the pointed cone $\valCN$ that refines all
images $p(C)$ of cones $C\in\fanX$.%
\vspace{0.5ex}
\item[(iii)]
Finally, let $\fanXtil$ be the coarsest common refinement of
$\fanX$ and $p^*\fanY:=
\{p^{-1}(C')\kst C'\in\fanY\}$.%
\end{enumerate}
\end{definition}

Let $\Xtor$ and $\Xtil$ denote
the toroidal $G/H$-embeddings corresponding to the
fans $\fanX$ and $\fanXtil$, respectively. Similarly, we call
$Y$ the toroidal $G/\kN$-embedding corresponding to the fan $\fanY$.
Invoking the remark on $G$-equivariant maps of spherical varieties at
the beginning of this section, we have the following $G$-equivariant
diagram connecting all these varieties:
$$
\xymatrix@!0@C=5em@R=7.5ex{
\Xtil \ar[r] \ar[d] & \Xtor \ar[r] \ar[d] & X\\
Y \ar[r] & \wondY.
}
$$
While the varieties and the maps of the first row carry the
natural $\rightT$-action provided by Lemma \ref{lem-extendTaction},
the torus $\rightT$ acts trivially on the second row. Recall that
the rays $a\in\fanY(1)$ correspond to the
$G$-invariant divisors $D_a$ of $Y$.

\subsection{Statement of the result}
\label{sub:toroidalThm}
In what follows, we assume that $G/H$ is of minimal rank.
Assume that $X$ is a spherical embedding of $G/H$ and consider
the varieties shown in the diagram of (\ref{sub:comparisonX}).
Fix a maximal torus $\leftT\subseteq \kB$.
Let $W:=N(\leftT)/\leftT$ denote the Weyl group.
The action of the Weyl group on $T$ by conjugations induces the action of $W$ on $\CX_{\leftT}$.
It is easy to check that $w\CX(G/H)=\CX(G/H^w)$ where $H^w:=wHw^{-1}$ and $B^w:=wBw^{-1}$.
As indicated in
$$
\xymatrix@R=1ex@C=1.7em{
0 \ar[r]
& \CX(G/\kN) \ar[r] \ar[dd]^-{w}
&  \CX(G/H) \ar[dr] \ar@{^(->}[rrr] \ar[dd]^-{w}
&&& \CX_{B} \ar@{=}[r] \ar[dd]^-{w}
& \CX_{\leftT} \ar[dd]^-{w}\\
&&& M  \ar[r] & 0 \\
0 \ar[r]
& w\CX(G/\kN) \ar[r]
&  w\CX(G/H) \ar[dr] \ar@{^(->}[rrr]
&&& \CX_{B^w} \ar@{=}[r]
& \CX_{\leftT}\\
&&& M^w  \ar[r] & 0,
}
$$
the $W$-action induces the isomorphism $\theta_w$ between $M$ and the character lattice $M^w$ of $\rightT^w:=\kN^w/H^w$.
We will use $W$ to build the divisorial fan $\pFan^X$
and $\pfan^X$ on $(Y,N)$.
Note that the affine covering defined in Proposition \ref{prop-covering} is $\rightT$-invariant.
Moreover, multiplication by $w\in W$ gives an isomorphism between the $\rightT$-varieties $\wh{X}_{\id}(C,\CFC)$ and  $\wh{X}_{w}(C,\CFC)$.
As the above diagram shows, the isomorphism does not affect the coefficients of the corresponding $p$-divisors in $N_\Q$.
Indeed, multiplication by $w$ gives the isomorphism between $X$ regarded as a $G/H$-embedding and $X$ regarded as a $G/H^w$-embedding, which takes the $\rightT$-variety $\wh{X}_{id}(C,\CFC)$  to the $\rightT^w$-variety $\wh{X}_{w}(C,\CFC)$.
Hence, if $\sum S_D\otimes D$ and $\sum S_{D'}\otimes D'$ are their respective p-divisors, then
$\sum S_{D'}\otimes D'=\sum \theta_w^*(S_{D})\otimes wD$.
If we now regard  $\wh{X}_{w}(C,\CFC)$ not as $\rightT^w$- but as $\rightT$-variety we have to apply the inverse of $\theta_w^*$ to the coefficients of its p-divisor thus getting $\sum S_{D}\otimes wD$.
\\[1ex]

We are going to define p-divisors corresponding to the affine charts in this covering.
To do so, fix a splitting of the exact sequence
$$
0 \to  N  \to \CX^*(G/H) \stackrel{p}{\rightarrow} \CX^*(G/\kN)\to 0
$$
from Proposition \ref{prop-lattices}
by choosing a cosection $s^*:\CX^*(G/H)\surj N$.
We use this projection to define $\shift(\colN)\in N$
as $s^*(\rho_\colH)$
where $\colH$ is the color of $G/H$ corresponding
to $\colN$ via the bijection
$\tits:\Colors(G/H)\stackrel{\sim}{\longrightarrow}\Colors(G/\kN)$
also established in Proposition \ref{prop-lattices}, i.e.\
satisfying $p(\rho_{\colH})=\rho_{\colN}$.
Moreover, we will use the splitting to always identify the
fibers $p^{-1}(a)$ with $p^{-1}(0)=N_\Q$.

\begin{definition}
\label{def-pFanX}
The maximal elements of $\pfan^X$ are p-divisors $\pfan^X(C,w)$
on $(Y,N)$ labeled by pairs of maximal colored cones $(C,\CF)\in\cfX$
(or, equivalently, by maximal ordinary cones $C\cap\valC\in\fanX$)
and elements $w\in W$.
They are defined as
$$
\pfan^X(C,w):=\sum_{a} \big(C\cap p^{-1}(a)\big)\otimes D_a
+
\sum_{\colN} \big(\shift(\colN)+(C\cap N_\Q)\big)\otimes \ovl{\colN}
+
\sum_{\colN} \emptyset\otimes w\ovl{\colN}
$$
where $a\in\fanY(1)$ runs through the (primitive generators of the) rays
of $\fanY$,
and
$\colN \in \Colors(G/\kN)$ runs through the colors of $G/\kN$.
Note that it makes the difference between $\pfan^X$ and
the $\pFan^X$ defined earlier
in Theorem \ref{thm:main} in (\ref{intro:spherical})
that now
$\colN $ runs through {\em all} the colors even in the third summand.
However, $\pfan^X=\pFan^X$  for toroidal $X$.
Note further that for $w=1$, the second summand will be annihilated by
the third one.
\end{definition}

The following theorem covers Theorem \ref{thm:main} for toroidal $X$,
i.e.\ for the case $X=\Xtor$.

\begin{theorem}
\label{thm-toroidal}
The divisorial fan $\pfan^X$ on $(Y,N)$ describes
$\Xtil$ and $\Xtor$ as $\rightT$-varieties, namely
$\Xtil=\ttoric(\pfan^X)=\ttoric(\pFan^X)$ and $\Xtor=\toric(\pfan^X)$.
\end{theorem}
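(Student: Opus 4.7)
The plan is to verify that $\Xtil = \ttoric(\pFan^X)$ by a local analysis on the affine charts indexed by the pairs $(C,w)$, and then deduce $\Xtor = \toric(\pFan^X)$ from the fact that $\toric$ computes global sections and that $\fanX$ is obtained from $\fanXtil$ by coarsening along $p^*\fanY$. Fix a maximal cone $C \in \fanXtil$ and an element $w \in W$; the goal is to identify $\ttoric(\pFan^X(C,w))$ with a $\rightT$-stable affine open of $\Xtil$, and then check that these opens glue correctly as $(C,w)$ varies.

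The first ingredient is that the Tits fibration $\tits : G/H \to G/\kN$ is a locally trivial $\rightT$-bundle over the open $B$-orbit in $G/\kN$; this extends to a trivialization over a $B$-stable open of $Y$ in the toroidal setting, as to be proved in Section \ref{sec:locTriv}. Hence $\Xtil \to Y$ is generically a trivial $\rightT$-fibration. The whole combinatorial content then comes from two sources: degenerations over the $G$-invariant prime divisors $D_a$ (indexed by $a \in \fanY(1)$) and the asymmetric behavior over the colors $\ovl{\colN}$. A toric downgrade along the exact sequence $0 \to N \to \CX^*(G/H) \to \CX^*(G/\kN) \to 0$ of Proposition \ref{prop-lattices}, carried out in Section \ref{sec:locTdown}, expresses the cone $C$ as a family of polyhedra $C \cap p^{-1}(a)$ parametrized by $a \in p(C)$. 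Under the p-divisor dictionary these slices become precisely the coefficients of $D_a$, while $C \cap N_\Q = C \cap p^{-1}(0)$ plays the role of the tail cone.

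For the color coefficients, Proposition \ref{prop-lattices} provides $p(\rho_\colH) = \rho_\colN$, so under the chosen splitting of $p$ one has a unique decomposition $\rho_\colH = \rho_\colN + \shift(\colN)$ with $\shift(\colN) \in N$. The compatibility constraint of (\ref{subsec:compGrpAct})---that under the commuting $G$- and $\rightT$-actions, coefficients of non-$G$-invariant prime divisors of $Y$ must be mere shifted tail cones---then forces the $\ovl{\colN}$-coefficient to equal $\shift(\colN) + (C \cap N_\Q)$. The Weyl-group label $w$ accounts for the fact that a single $B$-chart does not cover $Y$; the chart for $w$ is defined on $Y$ with the colors $w\ovl{\colN}$ removed, encoded by the formal summand $\emptyset \otimes w\ovl{\colN}$. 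Since $W$ acts simply transitively on the Borels containing $\leftT$, the $B^w$-charts cover $Y$ and the corresponding affine pieces cover $\Xtil$. For $w = \id$, the convention $\emptyset + \Delta = \emptyset$ makes the second and third sum for the same $\colN$ cancel, correctly reflecting that the $\id$-chart really does contain those colors.

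The most delicate step will be the last one: verifying that the collection $\pFan^X$ is a divisorial fan---i.e.\ that intersections are faces of both summands and belong to the family---so that the abstract gluing dictated by the fan structure really reproduces the gluings among the affine $\rightT$-stable opens of $\Xtil$. This reduces to the face relations of $\fanXtil$, but requires careful bookkeeping of how the color shifts and the $W$-labels interact across different cones. A secondary subtlety, worth checking early, is the correct separation of $\ttoric$ versus $\toric$: the $p^*\fanY$-refinement built into $\fanXtil$ is precisely what is needed so that $\pi : \Xtil \to Y$ becomes a good quotient in the sense of Definition \ref{def-TVpDiv}, whereas the underlying spherical variety $\Xtor$ reappears only at the global-sections level.
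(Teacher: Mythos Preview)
Your outline follows the same broad architecture as the paper --- local toric downgrade on a $B$-stable chart, then transport by the Weyl group --- but there is a genuine gap in how you determine the color coefficients.

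The compatibility constraint from (\ref{subsec:compGrpAct}) only tells you that the coefficient of a non-$G$-invariant prime divisor is \emph{some} lattice shift of the tail cone; it does not single out $\shift(\colN)=s^*(\rho_\colH)$ among all possible shifts. Indeed, the paper itself arrives at exactly this point at the end of Section~\ref{sec:locTdown}: after the Weyl gluing one has a divisorial fan $\pFan^X_{\pre}$ with color coefficients $l_{\colN,w}+(C\cap N_\Q)$ for a priori unknown and $w$-dependent vectors $l_{\colN,w}\in N$. Pinning these down to $\shift(\colN)$, independently of $w$, is the entire content of Section~\ref{sec:locTriv}: one restricts to the Tits fibration $G/H\to G/\kN$, embeds it into the vector bundle $E=G\times^{\kN}(\oplus_\chi\C_\chi)$, and computes the p-divisor of $E$ explicitly via the identification $\cL(\chi)\cong\cE(-\chi)$ (Lemma~\ref{lem-LE} and Proposition~\ref{prop:homSpaceDwnGr}). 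Your sentence ``the compatibility constraint \ldots\ then forces the $\ovl{\colN}$-coefficient to equal $\shift(\colN)+(C\cap N_\Q)$'' skips this entirely. If you want to avoid the bundle computation, you would need to invoke the vertex--valuation correspondence for p-divisors and argue that the unique vertex over $\ovl{\colN}$ encodes the $M$-part of the valuation $\rho_\colH$; but that argument has to be made, not asserted.

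A second, smaller omission: the reason the toric downgrade applies at all is the local structure theorem for toroidal embeddings, $\Xtor\setminus\UColorsTor\cong P_u\times\toric(\fanX)$ (Section~\ref{sub:skeleton}). This is the bridge between the spherical picture and the purely toric setup of (\ref{subsec:torDown}); your proposal alludes to a trivialization over a $B$-stable open but never names this ingredient, and without it the downgrade recipe has nothing to act on.
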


The proof of this statement consists of a local part
(Section~\ref{sec:locTdown}), and a global one (Section~\ref{sec:locTriv}).

\begin{remark}
The $\ttoric$ construction is a local one, hence it yields the same
result for the arguments $\pfan$ and $\pFan$.
The description of the divisorial fan providing $\Xtil$
can be even more simplified, namely
$$
\Xtil = \ttoric\Big(\sum_{a} \big(\cfX\cap p^{-1}(a)\big)\otimes D_a
+ \sum_{\colN} \big(\shift(\colN)+\tail\big)\otimes \ovl{\colN}\Big).
$$
Note that no labels via cells of $\cfX$ or elements of $W$ are necessary.
This is due to the fact that Definition \ref{def-TVpDiv} of
$\ttoric(\pDiv)$ doesn't make any positivity assumptions on $\pDiv$.
However, the latter are necessary for the definition of $\toric(\pDiv)$.
Thus, we cannot expect simplifications for the descriptions of
$\Xtor$ or $X$.
\end{remark}

\section{Toric downgrades give the local picture}
\label{sec:locTdown}

\subsection{The toric skeleton}
\label{sub:skeleton}
The first step towards the proof of Theorem \ref{thm-toroidal}
is a local understanding of $\Xtor$ with respect to the right
$\rightT$-action. Denoting by $\UColorsTor$ the union of the closures
in $\Xtor$ of all colors of $G/H$,
the stabilizer of $\UColorsTor$
is given by a parabolic subgroup
$P := P(\UColorsTor)$
of $G$ which is actually independent of the particular toroidal embedding.
Furthermore it comes with a Levi decomposition $P = P_u\rtimes L$ such
that
\[
\Xtor \setminus \UColorsTor \cong P \times^L \TV(\xSigma) \cong P_u \times
\TV(\xSigma),
\]
where $\TV(\xSigma)$ denotes the ordinary toric variety associated to
the fan $\xSigma$, cf.\ \cite[Section 2.4]{BrionFrench} and
\cite[Theorem 29.1]{timashev1}.
The accompanying torus is equal to a quotient of $L$, and its character
lattice equals $\CX(G/H)$. Moreover, we may consider $\rightT = \kN/H$
as a subtorus which turns $\TV(\xSigma)$ into a $\rightT$-variety, cf.\
loc.cit.

The very same procedure also works for $\Xtil$ and $Y$. Moreover, it
is compatible with the morphisms shown in (\ref{sub:comparisonX}). Hence,
denoting the union of the closures of all colors of $G/H$ and $G/\kN$ in
the respective varieties by $\UColorsTil$ and $\UColorsN$, we obtain
the following commutative diagrams
$$
\xymatrix@!0@C=7.0em@R=4.5ex{
\Xtil \setminus \UColorsTil \ar[rd] \ar[ddd] \ar[rrr]^-{\hspace{3em}\sim}
&&& P_u\times\toric(\fanXtil) \ar[rd] \ar[ddd]   \\
& \Xtor \setminus \UColorsTor \ar[rrr]^-{\hspace{-4em}\sim}
&&& P_u\times\toric(\fanX)\\
{\ }\\
Y \setminus \UColorsN \ar[rrr]^-{\hspace{3em}\sim}
&&& P_u\times\toric(\fanY).
}
$$
Of particular interest to us is the right hand side of the diagram. There
we have two maps between three toric varieties multiplied with the
unipotent group $P_u$. In the next section, we will show how such a diagram between toric
varieties can be understood in the context of $\rightT$-varieties and
polyhedral divisors.

\subsection{Toric downgrades}
\label{subsec:torDown}
There is a very prominent procedure which gives rise to polyhedral
divisors and divisorial fans. This construction plays a fundamental role
in the proof of Theorem~\ref{thm-toroidal}, so we shortly recall it from
\cite[Section 8]{tvar0}.

Let $\rightT\subseteq\bT$ be a subtorus, and assume that we have fixed a
splitting of the corresponding exact sequence of 1-parameter subgroups
$$
0 \to N \to \bN \stackrel{p}{\to} \yN \to 0.
$$
Now, whenever $Z=\toric(\Sigma)$ is a toric variety given by a fan
$\Sigma$ in $\bN_\Q$,
then we define the fans $\fanY:=p(\Sigma)$ and
$\bSigma:=
\{C\cap p^{-1}(C')\kst C\in\Sigma,\, C'\in\fanY\}$ as in
Definition~\ref{def-newFans}(ii) and (iii), respectively.
They give rise to toric varieties $\til{Z}:=\toric(\bSigma)$ and
$\toric(\fanY)$. Similarly to the situation in (\ref{sub:skeleton}),
these varieties fit into the diagram
$$
\xymatrix@!0@C=6.0em@R=7ex{
\toric(\bSigma) \ar[r]^-{} \ar[d]^-{p} & \toric(\Sigma)\\
\toric(\fanY).
}
$$
The embedding $\rightT\hookrightarrow\bT$
turns $\til{Z}$ and $Z$ of the upper row into $\rightT$-varieties.
They can be described by a divisorial fan $\pFan$ on $(\toric(\fanY),\,N)$.
Let $\yT:=\yN\otimes_\Z\C^*$ denote the torus of the toric variety
$\toric(\fanY)$. It turns out that the divisors occurring in $\pFan$
are $\yT$-invariant, i.e.\ they are closures $\ko{\orb}(a)$
of $\yT$-orbits of codimension
one parameterized by the rays $a\in\fanY(1)$. We define
$$
\pFan:= \sum_{a\in\fanY(1)}\hspace{-0.3em}\pFan_a\otimes \ko{\orb}(a)
\hspace{1em}\mbox{with}\hspace{1em}
\pFan_a=\Sigma\cap p^{-1}(a),
$$
i.e.\ all $\pFan_a$ become polyhedral subdivisions of
$p^{-1}(a)=N_\Q$ with a naturally defined labeling.

\begin{proposition}\cite[Section 8]{tvar0}
\label{prop-toricDG}
The $\rightT$-structure of $\til{Z}\to Z$
is given by the divisorial fan $\pFan$
on $(\toric(\fanY),\,N)$, i.e.\
this morphism is equal to $\ttoric(\pFan)\to\toric(\pFan)$.
\end{proposition}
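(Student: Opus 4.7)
The plan is to reduce to the affine case and then glue. Given a maximal cone $\bsigma \in \bSigma$, by construction there are unique $\sigma \in \Sigma$ and $\tau \in \fanY$ with $\bsigma = \sigma \cap p^{-1}(\tau)$. Restricting $\pFan$ to the toric chart $U_\tau := \toric(\tau) \subseteq \toric(\fanY)$ discards the summands indexed by rays $a \notin \tau(1)$ and yields the polyhedral divisor
\[
\pDiv_\bsigma := \sum_{a \in \tau(1)} (\bsigma \cap p^{-1}(a)) \otimes \ko{\orb}(a),
\]
whose coefficients share the common tail cone $\bsigma \cap N_\Q$ under the chosen splitting. Conditions (1)--(3) of Definition \ref{def-pDiv} are immediately met on the smooth toric chart $U_\tau$, and the positivity required in Definition \ref{def-TVpDiv} is automatic because $U_\tau$ is affine.

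The heart of the argument is a combinatorial identification of $\ttoric(\pDiv_\bsigma)$ with the affine chart $\Spec\, \C[\bsigma^\vee \cap \bM]$ of $\til{Z}$. Using the dual splitting $\bM = M \oplus \yM$, the $\rightT$-action decomposes $\C[\bsigma^\vee \cap \bM]$ according to the $M$-component, and for each $u \in M$ the $u$-graded piece is spanned by the monomials $\chi^{\bar u}$ with $\bar u = (u, \bar u_2) \in \bsigma^\vee$. I plan to establish the key equality
\[
\bigl\{\bar u_2 \in \yM : (u, \bar u_2) \in \bsigma^\vee\bigr\} \;=\; \bigl\{\bar u_2 \in \yM : \langle a, \bar u_2\rangle \geq -\min \langle \bsigma \cap p^{-1}(a), u\rangle \text{ for all } a \in \tau(1)\bigr\},
\]
identifying the left hand side with the lattice polytope of global sections of $\CO_{U_\tau}(\pDiv_\bsigma(u))$. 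This amounts to dualizing the description $\bsigma = \sigma \cap p^{-1}(\tau)$ and carefully distinguishing the extremal rays of $\bsigma$ sitting inside $N_\Q$ (which contribute the tail cone condition on $u$) from those projecting non-trivially to $\yN_\Q$ (which contribute the inequalities on $\bar u_2$). This is the step that essentially uses the splitting, and I expect it to be the main technical obstacle; the rest is bookkeeping.

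Once this affine identification is in place, the $\CO_{U_\tau}$-algebra produced by $\pDiv_\bsigma$ agrees degreewise with the $M$-graded pushforward of $\CO_{\til{Z}}$, so $\ttoric(\pDiv_\bsigma)$ coincides with the affine chart $\Spec\, \C[\bsigma^\vee \cap \bM]$ of $\til{Z}$. Face relations $\bsigma' \faceof \bsigma$ inside $\bSigma$ translate directly into the face condition of (\ref{subsec:divFan}) for the pair $\pDiv_{\bsigma'}, \pDiv_\bsigma$, so the collection $\{\pDiv_\bsigma : \bsigma \in \bSigma\}$ is a divisorial fan whose combinatorial data is precisely $\pFan$, and the gluing is $\til{Z} = \ttoric(\pFan)$. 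Finally, $\toric(\pFan) = Z$ follows from the definition of the absolute spectrum in Definition \ref{def-TVpDiv}: $\Spec\, \Gamma(U_\tau, \CA_{\pDiv_\bsigma})$ recovers $\Spec\, \C[\sigma^\vee \cap \bM]$, which is the affine toric chart of $Z = \toric(\Sigma)$ attached to $\sigma$, and these charts glue along the same face relations that already yielded $\til{Z}$.
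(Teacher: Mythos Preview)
The paper does not actually prove this proposition; it is quoted verbatim from \cite[Section~8]{tvar0}, so there is no in-paper proof to compare against. Your argument for $\ttoric(\pFan)=\til Z$ is essentially the standard one and is fine. The problem lies in the last paragraph, where you conclude $\toric(\pFan)=Z$.

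You index the p-divisors of $\pFan$ by cones $\bsigma\in\bSigma$ and take $\loc(\pDiv_\bsigma)=U_\tau$, an \emph{affine} toric chart. But for a p-divisor with affine locus one always has $\ttoric(\pDiv)=\toric(\pDiv)$, since $\Spec_{U_\tau}\CA=\Spec\,\Gamma(U_\tau,\CA)$. Hence your own computation forces
\[
\toric(\pDiv_\bsigma)=\ttoric(\pDiv_\bsigma)=\Spec\,\C[\bsigma^\vee\cap\bM],
\]
and gluing yields $\toric(\pFan)=\til Z$, not $Z$. The sentence ``$\Spec\,\Gamma(U_\tau,\CA_{\pDiv_\bsigma})$ recovers $\Spec\,\C[\sigma^\vee\cap\bM]$'' is simply false whenever $\bsigma\subsetneq\sigma$, i.e.\ whenever $p(\sigma)$ gets genuinely subdivided by $\fanY$.

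The fix is to label the maximal p-divisors by cones $\sigma\in\Sigma$ rather than by $\bsigma\in\bSigma$: set
\[
\pDiv_\sigma=\sum_{a\in\fanY(1)}\big(\sigma\cap p^{-1}(a)\big)\otimes\ko{\orb}(a),
\]
so that $\loc(\pDiv_\sigma)$ is the (generally non-affine) open toric subvariety $\bigcup_{\tau\in\fanY,\;\tau\subseteq p(\sigma)}U_\tau$. Then $\ttoric(\pDiv_\sigma)$ is covered by the affine pieces $\Spec\,\C[\bsigma^\vee\cap\bM]$ for all $\bsigma\in\bSigma$ with $\bsigma\subseteq\sigma$, while $\toric(\pDiv_\sigma)=\Spec\,\Gamma(\loc(\pDiv_\sigma),\CA)=\Spec\,\C[\sigma^\vee\cap\bM]$, and the contraction $\ttoric(\pDiv_\sigma)\to\toric(\pDiv_\sigma)$ is precisely the toric morphism induced by the refinement $\bSigma|_\sigma\to\{\sigma\}$. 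Gluing over $\sigma\in\Sigma$ then gives $\til Z\to Z$ as required. This is the ``naturally defined labeling'' alluded to in the paper.
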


\subsection{The $\rightT$-variety $\Xtor\setminus\UColorsTor$}
\label{subsec:XwithoutColasTVar}
Combining results from (\ref{sub:skeleton}) and (\ref{subsec:torDown}),
we deduce that the $\rightT$-equivariant map
$(\Xtil \setminus \UColorsTil)\to(\Xtor \setminus \UColorsTor)$
is equal to $\ttoric(\pfan^X_1)\to\toric(\pfan^X_1)$
where $\pfan^X_1$ consists of the p-divisors
$\pfan^X\!(\kbb,\id_W)$ which were introduced in Definition~\ref{def-pFanX}:
\\[0.5ex]
The first summand is literally built by the recipe of the toric downgrade
of (\ref{subsec:torDown}); the former divisors
$\ko{\orb}(a)$ have just been replaced by
$P_u\times \ko{\orb}(a) = D_a\setminus \UColorsN$.
The second summand in $\pfan^X_1$ is void because of $w=1$, and the
presence of the last one just means that
the divisorial fan is supposed to be evaluated on
$Y \setminus \UColorsN$ instead of the entire complete $Y$.

\subsection{The action of the Weyl group}
\label{subsec:actWeyl}
Both spherical varieties $\Xtil\to\Xtor$ are covered by the open subsets
$(\Xtil \setminus w\UColorsTil)\to(\Xtor \setminus w\UColorsTor)$ where
$w\in W$ runs through all elements of the Weyl group. Since these charts
arise from
$(\Xtil \setminus \UColorsTil)\to(\Xtor \setminus \UColorsTor)$
by applying $w$, they are equal to
$\ttoric(\pfan^X_w)\to\toric(\pfan^X_w)$
with $\pfan^X_w:=w(\pfan^X_1)$, i.e.\
$$
\pfan^X_w:=\sum_{a\in\fanY} \big(C\cap p^{-1}(a)\big)\otimes D_a
+
\sum_{\colN\in\ColorsN} \emptyset\otimes w\ovl{\colN}.
$$
Gluing the charts of $\Xtil$ (and similarly of $\Xtor$) leads
to isomorphisms $\varphi_w$
$$
\xymatrix@!0@C=4.5em@R=4ex{
&& \Xtil \setminus \UColorsTil\hspace{1em}
&& \ttoric(\pfan^X_1+\emptyset\otimes w\UColorsN) \ar@{^(->}[rr]
                           \ar[dd]_-{\sim}^-{\varphi_w}
&& \ttoric(\pfan^X_1)\\
\Xtil \setminus (\UColorsTil \cap w\UColorsTil)
                           \ar@{^(->}[rru]\ar@{^(->}[rrd]\\
&& \Xtil \setminus w\UColorsTil\hspace{1em}
&& \ttoric(\pfan^X_w+\emptyset\otimes \UColorsN)\ar@{^(->}[rr]
&& \ttoric(\pfan^X_w).
}
$$
Note that we use $\emptyset\otimes\UColorsN$ as an abbreviation for
$\sum_{\colN\in\ColorsN} \emptyset\otimes \ovl{\colN}$, and recall from
(\ref{subsec:divFan}) what equivariant maps between $\rightT$-varieties
look like in terms of p-divisors or divisorial fans.
Since $\varphi_w$ induces the identity map $\id_Y$ on Y,
it corresponds to a plurifunction $\pf_w$ with
$$
\pfan^X_1+(\emptyset\otimes w\UColorsN)\subseteq
\pfan^X_w+(\emptyset\otimes \UColorsN) + \div(\pf_w).
$$
We cannot expect to have $\div(\pf_w)=0$. Otherwise all local isomorphisms
$\Xtil \setminus w\UColorsTil \cong wP_u \times \TV(\bSigma)$
would glue to a global one and thus expose $\TV(\bSigma)$ as a factor of
$\Xtil$. On the other hand, $\div(\pf_w)$ clearly has to vanish on those
slices where both divisorial fans already agreed in the first place. This
observation shows that
$\supp(\div\pf_w)\subseteq \UColorsN \cup w\UColorsN$.
Furthermore, we see that coefficients of the principal polyhedral divisors
$\div\pf_w$ are just shifts of the tail fan. Using this ``hint", we
correct the previous definition by
$$
\pfan^X_w:= \pfan^X_w + \div(\pf_w).
$$
Then we still have that
$\Xtil \setminus w\UColorsTil\cong \ttoric(\pfan^X_w)$. But
the gluing of $\Xtil \setminus \UColorsTil$ and
$\Xtil \setminus w\UColorsTil$ now simply corresponds to the inclusion
$
\pfan^X_1+(\emptyset\otimes w\UColorsN)\subseteq
\pfan^X_w+(\emptyset\otimes \UColorsN).
$
Thus, the corrected $\pfan^X_w$ fit into a huge common divisorial fan
$\pfan^X_{\pre}$ with $\pfan^X_{\pre}(\kbb,w)=\pfan^X_w$. Up to now
we have  proven that $\Xtil=\ttoric(\pfan^X_{\pre})$ and
$\Xtor=\toric(\pfan^X_{\pre})$. Yet, in contrast to the definition of
$\pfan^X$ in Definition \ref{def-pFanX} in (\ref{sub:toroidalThm})
we have that
$$
\pfan^X_{\pre}(C,w)=\sum_{a} \big(C\cap p^{-1}(a)\big)\otimes D_a
+
\sum_{\colN} \big(l_{\colN,w}+(C\cap N_\Q)\big)\otimes \ovl{\colN}
+
\sum_{\colN} \emptyset\otimes w\ovl{\colN}
$$
for certain elements $l_{\colN,w}\in N$. To complete the proof of
Theorem \ref{thm-toroidal}, it remains to check that these elements
do not depend on $w$ and are equal to $\shift(\colN)$.

\section{Concluding the toroidal case}
\label{sec:locTriv}

\subsection{}
\label{subsec:TitsLocTriv}
Restricting the map $\tits:\Xtil\to Y$ introduced in (\ref{sub:comparisonX})
to $\tits^{-1}(G/\kN)\to G/\kN$, we obtain a locally trivial fibration.
This
is well-known (following from $G$-homogeneity), and it is also
visible in the description
of $\Xtil$ as $\ttoric(\pfan^X_{\pre})$ in (\ref{subsec:actWeyl}) --
it is reflected by the fact that, after restricting
the divisorial fan $\pfan^X_{\pre}$ to $G/\kN$, all its remaining
polyhedral coefficients are shifted tail fans only.
\\[1ex]
The map $\tits^{-1}(G/\kN)\to G/\kN$ extends the classical
Tits fibration $\tits:G/H\to G/\kN$. In particular, both share the same twist,
which is encoded in the lattice elements $l_{\colN,w}\in N$
introduced at the end of (\ref{subsec:actWeyl}).
The only difference between the divisorial fans describing
$\tits^{-1}(G/\kN)$ and $G/H$ can be found in their tail fans which are
$\tail(\pFan)$ and $\{0\}$, respectively.
\\[1ex]
We exploit this relation to determine the shift vectors
$l_{\colN,w}\in N$ by presenting a polyhedral divisor $\pDiv^{G/H}$
supported on the colors $\Colors(G/\kN)$ on $G/\kN$ such that
$G/H \cong \relTV(\pDiv^{G/H})$ under the right action of the torus
$\rightT = \kN/H$. In particular, in this section we will forget about the
embeddings $\Xtil$, $\Xtor$, and $X$ discussed before --
we just focus on the original Tits fibration.

\subsection{The Tits fibration}
\label{sub:bundleEL}
By abuse of notation, let $\tits$ also denote the $\bQ$-linear extension
$\bQ^{\Colors(G/H)} \to \bQ^{\Colors(G/\kN)}$
of the natural identification of colors $\tits: \Colors(G/H)
\stackrel{\sim}{\longrightarrow} \Colors(G/\kN)$.
Recall further from Proposition \ref{prop-lattices}
and its proof in (\ref{sub:comparison})
that we have an exact sequence
$$
0 \to \CX(G/\kN) \to \CX(G/H) \to M \to 0
$$
together with a splitting associated to a
section $s: M \to \CX(G/H)$.
Moreover, given a character
$\chi \in M$, we fix an associated eigenfunction
$f_{s(\chi)} \in \bC(G/H)^{(\kB)}_{s(\chi)}$ on $G/H$.
In other words, it satisfies
$f_{s(\chi)}(b^{-1}gH) = s(\chi)(b)\cdot f_{s(\chi)}(gH)$.
We now define
\[
\begin{array}{c}
\cL(\chi) := \cO_{G/\kN}(\tits(\divisor f_{s(\chi)}))
= \cO_{G/\kN}\big(\sum_{\colN \in \Colors(G/\kN)}
\langle s(\chi),\rho_{\tits^{-1}(\colN)}\rangle \colN\big),
\end{array}
\]
where, as before,
$\rho_{\colH}=\rho_{\tits^{-1}(\colN)} \in \CX^*(G/H)_\bQ$
denotes the restriction of the
valuation associated to the color $\colH=\tits^{-1}(\colN) \in \Colors(G/H)$.
Note also that by $\tits(\sum_{D\in\Colors(G/H)}a_D D)$ we mean $\sum_{D'\in\Colors(G/H')}a_D D'$ by using our identification of colors $D=\tits^{-1}(D')$
in $G/H$ and $G/\kN$.
\\[1ex]
On the other hand, choosing a basis $\cB_M$ of $M$ we may embed
$\rightT = \Hom_{\textnormal{group}}(M,\bC^*)$ inside $\bC^m$ with
$m := \rank M$. Note that the action of $\rightT$ on itself extends to an
action on $\bC^m$ such that
$\bC^m = \oplus_{\chi \in \cB_M}\bC_{\chi}$ as a $\rightT$-module where $\rightT$
acts on $\bC_{\chi}$ via the character $\chi \in \cB_M \subset M$.
Hence we obtain the following embedding of $\rightT$-varieties
\[
G/H = G \times^{\kN} \rightT \subset
G \times^{\kN}\hspace*{-1ex} (\oplus_{\chi \in \cB_M}\bC_\chi) =: E \,.
\]
Let $\cE$ denote the sheaf of sections of $E$. Note that
it is equal to $\oplus_{\chi \in \cB_M} \cE(\chi)$ where $\cE(\chi)$
denotes the sheaf of sections of
$G \times^{\kN}\hspace*{-0.2ex} \bC_{\chi}$.

\begin{lemma}
\label{lem-LE}
$\cL(\chi) \cong \cE(-\chi)$, namely
$f_{s(\chi)}\cdot \cL(\chi) = \cE(-\chi)$
as subsheaves of $\C(G/H)$.
\end{lemma}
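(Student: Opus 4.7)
The plan is to identify $f_{s(\chi)}$ with a rational section of $\cE(-\chi)$, so that the claimed isomorphism $\cL(\chi)\cong\cE(-\chi)$ becomes multiplication by $f_{s(\chi)}$. Unwinding the definition of the associated bundle $G\times^{\kN}\C_{-\chi}$, a section of $\cE(-\chi)$ over an open $U\subseteq G/\kN$ is the same as a regular function $\phi$ on the preimage of $U$ in $G$ satisfying the $\kN$-equivariance $\phi(gn)=\chi(n)\phi(g)$ for $n\in\kN$. Since $s:M\hookrightarrow\CX(G/H)$ is a section of the projection, Lemma \ref{lem-multF} tells us that the restriction of $f_{s(\chi)}$ to $\kN$ is precisely the character $\chi\in M$; combined with partial multiplicativity this gives $f_{s(\chi)}(gn)=f_{s(\chi)}(g)\chi(n)$. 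Thus $f_{s(\chi)}$, pulled back from $G/H$ to $G$, is already a rational section of $\cE(-\chi)$.

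Second, I would compute the divisor of $f_{s(\chi)}$ on $G$. Being $B$-semiinvariant, its divisor on $G/H$ is $B$-stable, hence supported on colors, with
$$
\divisor_{G/H}(f_{s(\chi)})\;=\;\sum_{\colH\in\Colors(G/H)}\langle s(\chi),\rho_\colH\rangle\,\colH.
$$
Pulling this back along the smooth principal $H$-bundle $G\to G/H$, and using that for $\colN=\tits(\colH)$ the preimages of $\colH$ and of $\colN$ in $G$ coincide as reduced subschemes (both are the same set, and both pullbacks are reduced since the morphisms are smooth), one obtains $\divisor_{G}(f_{s(\chi)})$ equal to the pullback of
$$
D\;:=\;\sum_{\colN\in\Colors(G/\kN)}\langle s(\chi),\rho_{\tits^{-1}(\colN)}\rangle\,\colN
$$
from $G/\kN$ to $G$. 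This $D$ is exactly the divisor defining $\cL(\chi)=\cO_{G/\kN}(D)$.

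Combining the two ingredients yields the isomorphism. For $g\in\cL(\chi)(U)$ the inequality $\divisor(g)+D\geq 0$ on $U$ pulls back to $\divisor(f_{s(\chi)}\cdot g)\geq 0$ on the preimage of $U$ in $G$, so $f_{s(\chi)}\cdot g$ is a regular $\kN$-equivariant function there, i.e.\ a regular section of $\cE(-\chi)$ over $U$. Conversely, given $\phi\in\cE(-\chi)(U)$, the ratio $\phi/f_{s(\chi)}$ is $\kN$-invariant on the preimage of $U$ (the $\chi$-twists cancel), hence descends to a rational function $g$ on $U$; the same divisor calculation shows $g\in\cL(\chi)(U)$. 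The main obstacle is careful convention checking: one must verify that the sign in the $\kN$-action on $\C_{-\chi}$ matches the transformation law of $f_{s(\chi)}$, and that the principal-bundle pullbacks of $\colH$ and of $\colN$ to $G$ really do agree as Cartier divisors (which rests on the color bijection $\tits:\Colors(G/H)\xrightarrow{\sim}\Colors(G/\kN)$ from Proposition \ref{prop-lattices}).
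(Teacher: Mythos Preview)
Your approach is essentially the same as the paper's: identify sections of $\cE(-\chi)$ with $\kN$-semiinvariant regular functions on preimages in $G$, use Lemma~\ref{lem-multF} to see that $f_{s(\chi)}$ has the required transformation law under right $\kN$-translation, and conclude that multiplication by $f_{s(\chi)}$ carries $\Gamma(U,\cL(\chi))$ onto $\Gamma(U,\cE(-\chi))$. You are in fact more explicit than the paper on two points---the divisor pullback argument (that the preimages in $G$ of $\colH$ and of $\tits(\colH)$ agree, which is exactly what makes $f_{s(\chi)}\cdot\zeta$ regular) and the converse direction---both of which the paper leaves to the reader.

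One caution on the convention check you flag: in the paper's conventions the map $\CX(G/H)\to M$ of Lemma~\ref{lem-multF} involves an inverse (parallel to $\chi(f)=f|_B^{-1}$), so one actually gets $f_{s(\chi)}(gh')=f_{s(\chi)}(g)\,\chi(h')^{-1}$, not $\chi(h')$; correspondingly the paper identifies $\Gamma(U,\cE(-\chi))$ with functions satisfying $\eta\cdot h'=\chi(h')^{-1}\eta$. Your two sign choices are opposite to the paper's but consistent with each other, so the conclusion is unaffected.
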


\begin{proof}
Given an open subset $U \subset G/\kN$ we have that
$$
\Gamma(U,\cE(\chi))  = \Mor_{\kN}(\pi^{-1}(U),\bC_\chi)
=
\big\{\eta \in \cO_G(\pi^{-1}(U)) \,|\, \eta \cdot h' = \chi(h')\eta \big\}
$$
where $\pi$ denotes the projection $G \to G/\kN$ (which factors
through $\tits$), and $\chi$ is considered a character on $\kN$ which
is trivial on $H$, cf.\ \cite[Proposition 2.1]{timashev1}.
The function $f=f_{s(\chi)}$ was introduced as a $\kB$-eigenfunction
for $s(\chi)\in\CX_\kB$; we may assume that $f(1)=1$.
According to Lemma \ref{lem-multF}
this implies that $f(bHh')=f(bH)\,f(h') = f(bH)\,\chi(h')^{-1}$.
Since $\kB H$ is dense inside $G$, this means that
$f_{s(\chi)}$ is a $\chi$-eigenfunction for the right $\rightT$-action, too.
Hence, if we multiply the elements of
\[
\Gamma(U,\cL(\chi)) =
\{\zeta \in \bC(G/\kN)\,|\, \divisor \zeta + \tits(\divisor f_{s(\chi)})|_U \geq 0\}
\subset \bC(G/\kN) = \bC(G)^{\kN}
\]
with $f_{s(\chi)} \in \bC(G)^H$, we obtain regular functions on
$\pi^{-1}(U) \subset G$ which are $\kN$-semiinvariant with eigenvalue $\chi$,
namely,
\[
f_{s(\chi)}\cdot \Gamma(U,\cL(\chi)) =
\big\{\eta  \in \cO_G(\pi^{-1}(U))^{H} \kst
\eta \cdot h' = \chi(h')^{-1}\,\eta \big\} = \Gamma(U,\cE(-\chi)).
\vspace{-3ex}
\]
\end{proof}

\subsection{The shift vectors}
\label{sub:rhoD}
Recall that in (\ref{sub:toroidalThm}) we already had met the
section $s:M\to\CX(G/H)$ mentioned
in (\ref{sub:bundleEL}), but there it was used via the dual
cosection
$$
s^*:\CX^*(G/H)\surj N.
$$
In other words,
$(p,s^*):\CX^*(G/H)\stackrel{\sim}{\longrightarrow}
\CX^*(G/\kN)\oplus N$ establishes a splitting of the exact sequence from
Proposition \ref{prop-lattices}.
Note that this proposition also states that
$p(\rho_{\colH})=\rho_{\colN}$ for colors
$\colH\in\Colors(G/H)$ and $\colN=\tits(\colH)\in\Colors(G/\kN)$
where $\rho_{\kbb}$ refers to the elements of
$\CX^*(G/H)$ and $\CX^*(G/\kN)$ induced from the valuations associated to
these colors, respectively.

\begin{definition}
Using the notation from above,
we define for every color $\colN=\tits(\colH)$ its associated
\emph{shift vector}
$$
\shift(\colN)=\shift(\colH):=s^*(\rho_{\colH})\in N.
$$
That is, for $\chi\in M$,
$
\langle \chi, \,\shift(\colN)\rangle =
\langle \chi,\,s^*\rho_{\tits^{-1}(\colN)}\rangle =
\langle s(\chi),\,\rho_{\tits^{-1}(\colN)} \rangle.
$
\end{definition}

The choice of a basis $\cB_M$ of $M$
in (\ref{sub:bundleEL}) allows us to define the
polyhedral cone $\sigma \subset N_\Q:=N \otimes \bQ \cong  \bQ^n$
as the positive orthant in the latter space. This will become the
tail cone for the following important polyhedral divisor.

\begin{proposition}
\label{prop:homSpaceDwnGr}
The vector bundle $E \to G/\kN$ from
{\rm (\ref{sub:bundleEL})} is $\rightT$-equivariantly isomorphic
to $\relTV(\pDiv^{E})$ where
\[
\pDiv^{E} \;=\;
\sum_{\colN \in \Colors(G/\kN)} \hspace{-0.5em}
(\shift(\colN) + \sigma) \otimes \colN\,.
\]
\end{proposition}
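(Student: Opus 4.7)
The plan is to compute the $M$-graded sheaf of $\cO_Y$-algebras
$\cA = \bigoplus_{u \in \sigma^\vee \cap M} \cO_Y\!\bigl(\pDiv^E(u)\bigr)$
whose relative spectrum is $\relTV(\pDiv^E)$, and to identify it
$\rightT$-equivariantly with $\pi_*\cO_E = \sym\,\cE^\vee$.

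For $u \in \sigma^\vee \cap M$, since $\sigma$ is the positive orthant
and $u$ pairs nonnegatively with it, the minimum of $\langle\cdot,u\rangle$
on the polyhedron $\shift(\colN) + \sigma$ is attained at the vertex
$\shift(\colN)$; hence
\[
\pDiv^E(u) \;=\; \sum_{\colN \in \Colors(G/\kN)}\!\! \langle \shift(\colN), u\rangle\,\colN.
\]
By the definition $\shift(\colN) = s^*\rho_{\tits^{-1}(\colN)}$ this equals
$\sum_{\colN} \langle s(u), \rho_{\tits^{-1}(\colN)}\rangle\,\colN$, so
$\cO_Y(\pDiv^E(u))$ is precisely the line bundle $\cL(u)$ from (\ref{sub:bundleEL}).
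Lemma~\ref{lem-LE} then supplies an isomorphism
$\cL(u) \cong \cE(-u)$ realised by multiplication with $f_{s(u)}$.
Summing over $u \in \sigma^\vee \cap M$ yields a graded identification
\[
\cA \;\cong\; \bigoplus_{u \in \sigma^\vee \cap M}\! \cE(-u),
\]
which is exactly the $\rightT$-weight decomposition of
$\sym\,\cE^\vee = \sym\!\bigl(\bigoplus_{\chi \in \cB_M} \cE(-\chi)\bigr)$:
the monomial of multi-degree $(k_\chi)$ lies in the weight space labelled
by $u = \sum_\chi k_\chi\chi$, and these $u$ sweep out all of $\sigma^\vee \cap M$.

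It remains to verify that this graded matching is compatible with multiplication,
so that it upgrades to a $\rightT$-equivariant isomorphism of $\cO_Y$-algebras and
yields $\relTV(\pDiv^E) = \Spec_Y \cA \cong E$. On the $\cA$-side the multiplication
is the natural inclusion $\cO_Y(D)\cdot\cO_Y(D') \subseteq \cO_Y(D+D')$ inside
$\bC(Y)$; on the $\sym\,\cE^\vee$-side it is the symmetric-algebra product. Under
the Lemma~\ref{lem-LE} identification $\zeta\mapsto f_{s(u)}\zeta$, compatibility
reduces to the identity $f_{s(u)}\cdot f_{s(u')} = f_{s(u+u')}$, which holds because
$s$ is additive and because two $B$-semiinvariants of the same weight both taking
the value $1$ at $1\in G/H$ must agree. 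This multiplicative compatibility, together
with keeping the sign conventions for the $\rightT$-weights aligned with the
Altmann--Hausen grading convention on $\cA$, is the main subtlety of the argument;
once it is in place, the claimed $\rightT$-equivariant isomorphism
$E\cong\relTV(\pDiv^E)$ over $G/\kN$ follows.
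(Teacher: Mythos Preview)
Your proof is correct and follows essentially the same approach as the paper: both compute $\pDiv^E(u)=\sum_{\colN}\langle s(u),\rho_{\tits^{-1}(\colN)}\rangle\,\colN=\cL(u)$, invoke Lemma~\ref{lem-LE} to identify $\cL(u)\cong\cE(-u)$, and conclude that $\cA\cong\sym\,\cE^\vee$. The paper's version is terser and omits the multiplicative compatibility check you supply via $f_{s(u)}f_{s(u')}=f_{s(u+u')}$; your additional paragraph makes explicit what the paper leaves implicit.
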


\begin{proof}
By Lemma \ref{lem-LE} we can describe the vector bundle $E$ as
\[
\renewcommand{\arraystretch}{1.2}
\begin{array}{rcl}
E \;=\; \Spec_{G/\kN} \sym^\bullet \cE^\vee
&=& \Spec_{G/\kN} \bigoplus_{\chi \in \sigma\dual\cap M} \cE(-\chi)
\\
&=& \Spec_{G/\kN} \bigoplus_{\chi \in \sigma\dual\cap M} \cL(\chi).
\end{array}
\]
However, the very same result is obtained when we analyze the evaluation of the
polyhedral divisor $\pDiv^{E}$ on a multidegree $\chi\in\sigma\dual\cap M$,
namely
$$
\pDiv^{E}(\chi)\;=\;\sum_{\colN}\langle \chi,\,\shift(\colN) \rangle \cdot\colN
\;=\; \sum_{\colN}\langle s(\chi),\,\rho_{\tits^{-1}(\colN)} \rangle \cdot\colN
\;=\; \cL(\chi).
\vspace{-4ex}
$$
\end{proof}

As explained in (\ref{subsec:divFan}),
the $\rightT$-equivariant, open embedding $G/H \subset E$ translates
into a face relation of the corresponding polyhedral divisors. Since
this embedding is induced by $\rightT \subset \bC^m$, it arises from
the face relation $0\faceof\sigma$ among the tail cones.
So, as a corollary, we obtain a description of the polyhedral divisor
$\pDiv^{G/H}$. Note that it depends on the choice of the
section $s$ (hidden in the shift vectors $\shift(\colN)$).
However, in contrast to $E$ and $\pDiv^E$,
it doesn't depend on the choice of a basis of $M$.

\begin{corollary}
\label{cor:homSpaceDwnGr}
The $\rightT$-variety $G/H$ is equal to $\relTV(\pDiv^{G/H})$
where
$$
\pDiv^{G/H} =
\sum_{\colN \in \Colors(G/\kN)} \shift(\colN) \otimes \colN.
$$
In particular, its tail cone is equal to $0$.
\end{corollary}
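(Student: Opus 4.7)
The plan is to obtain $\pDiv^{G/H}$ immediately from Proposition \ref{prop:homSpaceDwnGr} by translating the open embedding $G/H \subset E$ into the language of p-divisors via the machinery recalled in (\ref{subsec:divFan}). Fibre-wise over $G/\kN$, this embedding is nothing but the inclusion of the dense torus orbit $\rightT \hookrightarrow \bC^m$ into the affine toric variety $\bC^m = \toric(\sigma)$; equivalently, it corresponds to the face relation $\{0\} \faceof \sigma$ between tail cones.

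By the dictionary from (\ref{subsec:divFan}), such a $\rightT$-equivariant open embedding is realized at the level of p-divisors by replacing each polyhedral coefficient of $\pDiv^E = \sum_{\colN}(\shift(\colN) + \sigma) \otimes \colN$ by the face that corresponds to $\{0\} \faceof \sigma$. The face of $\shift(\colN) + \sigma$ with trivial tail cone is precisely its unique vertex $\shift(\colN)$, so the natural candidate is $\pDiv^{G/H} = \sum_{\colN} \shift(\colN) \otimes \colN$. The statement on the tail cone is then manifest from the formula.

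What remains is the technical face condition from \cite[Prop 3.4]{tvar2}: for every point $y \in G/\kN$, the polyhedron $\Delta'_y := \sum_{\colN \ni y} \shift(\colN)$ must be a face of $\Delta_y := \sum_{\colN \ni y}(\shift(\colN) + \sigma)$. Since $\sigma + \sigma = \sigma$, one has $\Delta_y = \Delta'_y + \sigma$, a translated copy of $\sigma$ whose unique vertex is exactly $\Delta'_y$. This verifies the face condition and yields $G/H = \relTV(\pDiv^{G/H})$. The only subtle point I anticipate is pinning down which face relation on tail cones underlies the fibrewise open embedding $\rightT \hookrightarrow \bC^m$; once this is recognised as $\{0\} \faceof \sigma$, the rest is a short verification resting on the idempotence $\sigma + \sigma = \sigma$.
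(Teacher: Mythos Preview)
Your proposal is correct and follows essentially the same route as the paper: deduce $\pDiv^{G/H}$ from $\pDiv^{E}$ by interpreting the $\rightT$-equivariant open embedding $G/H\subset E$ (fibrewise $\rightT\hookrightarrow\C^m$) via the face relation $\{0\}\faceof\sigma$ and the machinery of (\ref{subsec:divFan}). The only difference is that you spell out the verification of the face condition from \cite[Prop~3.4]{tvar2}, which the paper leaves implicit.
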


This completes the proof of Theorem \ref{thm-toroidal}.

\section{The general case}
\label{sec:invCol}

\subsection{}
\label{subsec:prelim1}
Recall that we have fixed $\leftT \subseteq \kB\subseteq G$ such that $\kB H\subseteq G$ is open and dense.
Let $\broots$ denote a basis of the positive roots $\proots$ that correspond to the choice of $\kB$.
In particular, $W=W_G$ is generated by simple reflections $\{s_\alpha\kst \alpha\in\broots\}$.
For every subset $I\subset\broots$ of simple roots let $W_I\subset W$ denote the subgroup
generated by simple  reflections $\{s_\alpha\ |\ \alpha\in I\}$.
The subgroup $W_I$ comes with a distinguished set $W^I\subseteq W$ consisting of the representatives
of minimal length
of the left cosets of $W_I$.
In particular,
$W^I\times W_I\stackrel{\sim}{\to} W$ preserves the minimal
representations as products of simple reflections.
For proofs and further details see \cite{Springer}.

Note that if $G/H$ is of minimal rank then for every simple reflection $s_\alpha\in W$ there exists at most one color
$D=D(\alpha)\in \Colors(G/H)$ such that $s_\alpha(D)\ne D$.
Namely, $D=\ovl{\cO_{s_\alpha u_0}}$ unless $s_\alpha\in W_H$ (see Remark \ref{rem-orbits}).
For an arbitrary subset $\CF \subseteq \Colors(G/H)$ of colors define the subgroup $W_{I(\CF)}\subset W$ by taking
$I(\CF)=\{\alpha\in\broots\ | \ D(\alpha)\in\CF \mbox{ or } s_\alpha\in W_H\}$.
In other words, $W_{I(\CF)}$ is generated by all simple reflections that leave
$\bigcup_{D \in {\Colors(G/H)\setminus\CFC}}D$ invariant.
In particular,
$W_{I(\CF)}=\{w\in W\kst w(\bigcup_{D \in {\Colors\setminus\CFC}}D)=
\bigcup_{D \in {\Colors\setminus\CFC}} D\}$.
Indeed, the subgroup $\{g\in G\ |\ g(\bigcup_{D \in {\Colors\setminus\CFC}}D)=\bigcup_{D \in {\Colors\setminus\CFC}}D\}$ is parabolic, hence, its Weyl group is generated by simple reflections.
For example,
$W_{I(\emptyset)}=W_H$ is the Weyl group of the parabolic subgroup $P=P(\UColorsTor)$
mentioned in (\ref{sub:skeleton}).
The other extremal case
is $W_{I(\Colors(G/H))}=W$, i.e.\ $I(\Colors(G/H))=\broots$.
%


\subsection{}
\label{subsec:simpleWeyl}
Let $X \entspr (C,\CFC)$ be a simple spherical embedding of minimal rank.
Recall that there is an open affine $\rightT$-invariant covering $X = \bigcup_{w \in W} \wh{X}_w$
(see Proposition \ref{prop-covering}).
Note that some of these charts may be identical.
To obtain a non-redundant description we exploit
the subgroup $W_C:=W_{I(\CFC)}$ of $W$ associated to $(C,\CFC)$.
Summing things up, we obtain
$X = \bigcup_{w \in W^{I(\CFC)}} \wh{X}_w$,
where the $\wh{X}_w$ are now pairwise distinct.
\\[1ex]
Let $\pi:\Xtor \longrightarrow X$ (cf.\ diagram from (\ref{sub:comparisonX}))
be the map corresponding to
$(C\cap \valC,\emptyset)\to (C,\CFC)$. Then we have a similar
covering
$\Xtor = \bigcup_{w \in W} \XtorHat_w$.

\begin{lemma}
\label{lem-ClaimB}
For every $w\in W$, the map
$\pi_w:\bigcup_{w'\in w W_C}  \XtorHat_{w'}\to \wh{X}_w$
is the full preimage of $\wh{X}_w$ under
$\pi:\Xtor \longrightarrow X$.
In particular, $\pi_w$ is birational and proper.
\end{lemma}

\begin{proof}
We may assume that $w=\id$ (renaming $w'$ into $w$ afterwards).
It remains to show that
$\pi_{\id}:\bigcup_{w\in W_C} \XtorHat_{w}\to \wh{X}_{\id}$ is a full preimage,
i.e.\ that
$$
\bigcap_{w\in W_C}w\cdot\UColorsTor =
\bigcup_{\colH\in\Colors\setminus\CFC} \pi^{-1}(\ko{\colH}) \eqno(1)
$$
where we had defined
$\UColorsTor:=\bigcup_{\colH\in\Colors} \ko{\colH}\subseteq\Xtor$.
First, let us compare the intersections with the open orbit $G/H$, that is,
show  $$\Omega:=\bigcap_{w\in W_C}w\left(\bigcup_{\colH\in\Colors} \colH\right)= \bigcup_{\colH\in\Colors\setminus\CFC} \colH. \eqno(2)$$
Let $\cO_u$ be a $B$-orbit in $G/H$ that contains a given point $x\in\Omega$ (here we use notation of Section \ref{subsec:minimal_rank}).
Take an increasing path in the graph $\Gamma(G/H)$ from $\cO_u$ to the maximal $B$-orbit $\cO_{u_0}$.
Let $(\alpha_{i_1},\ldots, \alpha_{i_\ell})$ be the labels on its edges,
leading to
$wx\in w(\cO_u)\subset\cO_{u_0}\ne \bigcup_{\colH\in\Colors}\colH$
for $w=s_{\alpha_{i_\ell}}\cdots s_{\alpha_{i_1}}\in W_C$.
Then there exists an $i_j$ such that $i_j\notin I(\CFC)$ since otherwise
we would have $w\in W_C$.
Put $\alpha_{i_j}=\alpha$.
Then Lemma \ref{l.inclusion} implies that
$\cO_u\subset \colH$ for some $\colH\in\Colors\setminus\CFC$, namely, for
$\colH(\alpha)=\ovl{\cO_{s_{\alpha}u_0}}$.
\\[1ex]
Finally, note that the above argument goes through if $G/H$ is replaced by any other $G$-orbit
$\cO\subset \Xtor$ because $\cO$ is also of minimal rank by \cite[Lemma 2.1]{Ressayre}.
More precisely, let $\CFC(\cO)$ be the set of all colors of $\cO$ that are not contained in 
$\bigcup_{\colH\in\Colors\setminus\CFC}\pi^{-1}(\ko{\colH})$.
Identity (2) for $\cO$ and $\CFC(\cO)$ takes form
$$\bigcap_{w\in W_C}w\left(\bigcup_{\colH\in\Colors(\cO)} \colH\right)=
\bigcup_{\colH\in\Colors(\cO)\setminus\CFC(\cO)} \colH. \eqno(3)$$
We have $\cO\cap\UColorsTor=\bigcup_{\colH\in\Colors(\cO)}\colH$ and 
$\cO\cap \bigcup_{\colH\in\Colors\setminus\CFC} \pi^{-1}(\ko{\colH})=\bigcup_{\colH\in\Colors(\cO)\setminus\CFC(\cO)} \colH$.
Hence, both sides of (3) coinside with the respective sides of (1) intersected with $\cO$.
This implies equality (1).
\\[1ex]
Now, birationality and properness of $\pi_w$ follow directly from
the corresponding properties of
$\pi: \Xtor \to X$, cf.\ \cite[Theorem 4.2]{Knop}.
\end{proof}

\subsection{}
\label{subsec:prelim2}
Theorem \ref{thm:main} states that the simple spherical variety $X$
can be described by a divisorial fan $\pFan^X$ whose maximal elements are
indexed by elements of $W$, more precisely
\[
\pFan^X_w = \sum_{a\in\fanY} \hspace{-0.3em}\big(C\cap p^{-1}(a)\big)
\otimes D_a + \sum_{\colN\in\ColorsN}  \hspace{-0.3em}\big(\shift(\colN)
+ (C\cap N_\Q)\big)\otimes \ovl{\colN}
+ \hspace{-0.3em}\sum_{\colN\in\ColorsN\setminus\CF_C}
        \hspace{-0.5em}\emptyset\otimes w\ovl{\colN}.
\vspace{-1ex}
\]
The only difference with respect to $\;\pfan^{X}=\pFan^{\Xtor}$
is that the last sum
runs over $\ColorsN \setminus \CF_C$ instead of the
entire $\ColorsN$. In other
words, we have
\[
\begin{array}{rclll}
\pfan^{X}_w \; = \; \pFan^{\Xtor}_w\hspace{-0.6em}
& = & \cZ & \textnormal{on} &
\Utor_w := Y \setminus \bigcup_{D' \in \ColorsN}
w\overline{D'}\,, \\[.5ex]
\pFan^X_w & = & \cZ & \textnormal{on} & \kU_w := Y \setminus
\bigcup_{D' \in \ColorsN \setminus \CFC} w \overline{D'}\,,
\end{array}
\]
where
$\cZ = \sum_{a\in\fanY} \hspace{-0.3em}\big(C\cap p^{-1}(a)\big)
\otimes D_a + \sum_{\colN\in\ColorsN}  \hspace{-0.3em}\big(\shift(\colN)
+ (C\cap N_\Q)\big)\otimes \ovl{\colN}$
does not depend on $w\in W$.
Since $Y$ is also of minimal rank (cf.\ (\ref{sub:comparison})),
both $\cUtor := \{\Utor_w\kst w\in W\}$ and
$\kUU := \{\kU_w\kst w\in W\}$ are coverings of $Y$.
\begin{lemma}
\label{lem-ClaimA}
The covering $\cUtor$ is a refinement of $\kUU$. In detail,
for every $w\in W$ we have
$\bigcup_{w'\in w W_C} \Utor_{w'} = \kU_w$.
\end{lemma}

\begin{proof}
Similarly to the proof of Lemma \ref{lem-ClaimB},
we may assume that $w=\id$ (renaming $w'$ again into $w$ afterwards).
It remains to show that
$\bigcup_{w\in W_C} \Utor_{w} = \kU_{\id}$,
i.e.\ that
$$
\bigcap_{w\in W_C}w\cdot\UColorsN =
\bigcup_{\colN\in\ColorsN\setminus\CFC} \ko{\colN}
$$
with $\UColorsN:=\bigcup_{\colN\in\ColorsN} \ko{\colN}\subseteq Y$.
However, this claim literally equals the equation we
have shown in the proof of Lemma \ref{lem-ClaimB} for the $X$-level.
Thus, the same arguments apply.
\end{proof}

\subsection{}
\label{subsec:proof}
Our goal now is to
compare the map $\Xtor \longrightarrow X$ (cf.\
diagram from (\ref{sub:comparisonX})) with
the map $\TV(\pfan^X)\to\TV(\pFan^X)$.
The already proven Theorem \ref{thm-toroidal} ensures that the sources of both
maps coincide.
Using $\cZ$, we define
$\cA:= \bigoplus_{u \in (C\cap N_{\bQ})^\vee} \CO(\cZ(u))$
together with the following two affine $\rightT$-varieties
\[
X^{\toroidal}_w := \spec \Gamma(\Utor_w,\cA)\,, \qquad
X_w := \spec \Gamma(\kU_w, \cA)\,.
\]
They are open subsets of $\TV(\pfan^X)$ and $\TV(\pFan^X)$, respectively.
Everything fits now into the following commutative diagram:
$$
\xymatrix@R=3ex{
\Xtor \ar@{=}[d] & \ar@{_(->}[l]
\hspace{0.3em}\bigcup_{w'\in w W_C}  \XtorHat_{w'}
\ar@{=}[d] \ar[r]^-{\pi_w} &
\wh{X}_w  \ar@{^(->}[r] & X\\
\TV(\pfan^{X}) & \ar@{_(->}[l]
\hspace{0.3em}\bigcup_{w'\in w W_C} \Xtor_{w'}
\ar[r]^-{\psi_w} & X_w \ar@{^(->}[r] & \TV(\pFan^{X})
}
$$
While the vertical equalities
$\XtorHat_{w'} = \Xtor_{w'}$ come from Theorem \ref{thm-toroidal},
we have seen in the
Lemmata \ref{lem-ClaimB} and \ref{lem-ClaimA} that both central,
horizontal maps $\pi_w$ and $\psi_w$ are birational and proper.
On the other hand, both $\wh{X}_w$ and $X_w$ are affine --
hence, they have to coincide, too.
This proves Theorem~\ref{thm:main}.

\subsection{A counterexample for the non-minimal rank case}
\label{subsec:minRankCountEx}
Note that Proposition \ref{prop-covering} was essential for the proof.
The assumption that $G/H$ is of minimal rank in Theorem \ref{thm:main} was used both in Proposition \ref{prop-covering} and in Lemma \ref{lem-ClaimB}, and can not be dropped as can be seen from the following example with a non-trivial $\rightT$-action.

\begin{example}\label{ex.GL_2/H}
Take $G=GL_2$ and consider its action
on $X=\bP^1\times \bP^2$ by matrices
$$
\left(\begin{array}{cc} a & b \\ c & d \end{array}\right)\times
\left(\begin{array}{ccc} a & b & 0 \\ c & d & 0 \\ 0 & 0 & 1\\ \end{array}\right).
$$
It is easy to check that $X$ together with the point $(1:0)\times(0:1:1)$ is a spherical embedding of $G/H$, where
$$
H=\left\{\left(\begin{array}{cc} \lambda & 0 \\ 0 & 1 \end{array}\right)
\kST \lambda\in\C^*\right\}.
$$
Note that $G/H$ is not of minimal rank.
The $\rightT$-variety given by Theorem \ref{thm:main} in this case does not coincide with the whole $X$ since the former is not complete (note that the base space $Y$ in this case is exactly
the $G$-variety $\bP^1\times\bP^1$ from Example \ref{ex.P^1_P^1}).
Namely, Theorem \ref{thm:main} yields only four out of six standard affine charts for
$\bP^1\times \bP^2$.
\end{example}

\section{Examples}
\label{sec:examples}

\subsection{Horospherical varieties}
\label{subsec:exhorospherical}
We use the notation introduced in (\ref{subsec:horospherical}).
Also, recall from loc.cit.\ that $\valC = \CX(G/H)^*_\Q$ for any horospherical
embedding $G/H \subset X$.
Hence, our polyhedral divisors $\pFan^X$ will be defined on the flag variety $G/P$.
Note also that for horospherical varieties the exact sequence
$$
0 \to  N  \to \CX^*(G/H) \stackrel{p}{\rightarrow} \CX^*(G/P)\to 0
$$
reduces to the canonical isomorphism $N\simeq \CX^*(G/H)$, that is, there is no need to choose a splitting.
The following examples are taken from \cite{pasquier1}.

\subsubsection{Embeddings of $\Sl_2/U$}
This example continues Example \ref{ex.SL_2/U} and relates the colored fans on Figure 1 to their corresponding divisorial fans.
The Weyl group of $SL_2$ contains only two elements, $\id$ and $w$.
Using Theorems \ref{thm-toroidal}, \ref{thm:main}, and identifying
the color $D' = \{0\}$ and $wD'= \{\infty\}$ on $\bP^1$, we obtain the
following maximal elements of the respective divisorial fans:
\[
\begin{array}{rcl}
\pFan^{(a)}([0,\infty),\id) & = & \emptyset \otimes 0 \\
\pFan^{(a)}\big([0,\infty),w\big) & = & [1,\infty) \otimes 0
+ \emptyset \otimes \infty \\[1.5ex]

\pFan^{(b)}\big(([0,\infty),\alpha),\{\id,w\}\big) & = &
[1,\infty) \otimes 0 \\[1.5ex]

\pFan^{(c)}\big((-\infty,0],\id \big) & = & \emptyset \otimes 0 \\
\pFan^{(c)}\big((-\infty,0],w \big) & = & (-\infty,1] \otimes 0
+ \emptyset \otimes \infty \\[1.5ex]

\pFan^{(d)}\big([0,\infty),\id \big) & = & \emptyset \otimes 0 \\
\pFan^{(d)}\big([0,\infty),w \big) & = & [1,\infty) \otimes 0
+ \emptyset \otimes \infty \\
\pFan^{(d)}\big((-\infty,0],\id \big) & = & \emptyset \otimes 0 \\
\pFan^{(d)}\big((-\infty,0],w \big) & = & (-\infty,1] \otimes 0
 + \emptyset \otimes \infty \\[1.5ex]

\pFan^{(e)}\big(([0,\infty),\alpha),\{\id,w\}\big) & = &
[1,\infty) \otimes 0 \\
\pFan^{(e)}\big((-\infty,0],\id \big) & = & \emptyset \otimes 0 \\
\pFan^{(e)}\big((-\infty,0],w \big) & = & (-\infty,1] \otimes 0
+ \emptyset \otimes \infty

\end{array}
\]

They are all toric, and it can easily be verified that the torus action is the
action of a subtorus given by the following exact sequence of lattices of
one-parameter subgroups:
\[
\xymatrix{0 \ar[r] & \bZ \ar[r]^{\phi} & \bZ^2 \ar[r]^{\pi} \ar@/^/[l]^{\sigma} &
\bZ \ar[r] & 0 }
\]
with
\[
\begin{array}{ccc} \phi = \left(\begin{array}{c} 1 \\ 1 \end{array} \right), &
\pi = \left(\begin{array}{cc} 1 & -1 \end{array} \right), & \sigma =
\left(\begin{array}{cc} 1 & 0 \end{array} \right). \end{array}
\]

So we are in fact in a toric downgrade situation as described in
(\ref{subsec:torDown}). The Chow quotient $Y$ is $\bP^1$, and one can
check that applying the recipe from (\ref{subsec:torDown})
yields the same divisorial fans as described above.

\subsubsection{An example of rank 1 and $\rightT$-complexity 3}
Let $\kBalt\subset SL_3$ denote the subgroup of lower-triangular matrices.
We consider the subgroup $H \subset \kBalt$ of matrices whose second diagonal entry is 1.
This yields a four-dimensional horospherical homogeneous space $G/H$ of rank one over the full flag variety $G/\kBalt$.
There are four complete embeddings, but we will only have a closer look
at two of them, namely those whose colored fans are given in
Figure \ref{fig:CFembSL3/H}.

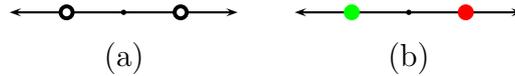
\begin{figure}[h]
\psset{unit=0.75cm}
\begin{pspicture}(0,-2)(10,0)

\psline{<->}(0,-1)(4,-1)
\qdisk(2,-1){1pt}
\qdisk(3,-1){3pt}
\psset{linecolor=white}
\qdisk(3,-1){1.5pt}
\psset{linecolor=black}
\qdisk(1,-1){3pt}
\psset{linecolor=white}
\qdisk(1,-1){1.5pt}
\psset{linecolor=black}
\uput[270](2,-1.3){(a)}

\psline{<->}(5,-1)(9,-1)
\qdisk(7,-1){1pt}
\psset{linecolor=red}
\qdisk(8,-1){3pt}
\psset{linecolor=green}
\qdisk(6,-1){3pt}
\psset{linecolor=black}
\uput[270](7,-1.3){(b)}

\end{pspicture}
\caption{Colored fans associated to complete embeddings of $\SL_3/H$.}
\label{fig:CFembSL3/H}
\end{figure}

Let $\alpha$, $\beta$ denote simple roots of $SL_3$.
The Weyl group is isomorphic to $S_3$. It is generated by the reflections $s_\alpha$ and  $s_\beta$ and consists of 6 elements: $1$, $s_\alpha$, $s_\beta$, $s_\alpha s_\beta$,
$s_\beta s_\alpha$, $s_\alpha s_\beta s_\alpha$.
Let $D'_\alpha$ and $D'_\beta$ denote the colors of $G/B$.
The $W$-action maps $D'_\alpha$ to $s_\alpha D'_\alpha$ or $s_\beta s_\alpha D'_\alpha$ (note that $s_\beta D'_\alpha=D'_\alpha$), and $D'_\beta$ to $s_\beta D_\beta$ or $s_\alpha s_\beta D_\beta$ (again,  $s_\alpha D'_\beta=D'_\beta$).
The following tables in Figure \ref{fig:divFansCFembSL3/H} encode the maximal elements of the corresponding divisorial fans.

\begin{figure}[h]
\[
\begin{array}{c||c|c|c}
\mathbf{(a)} & v & (-\infty,0] & [0,\infty) \\
\hline \hline
{D'}_\alpha & 1 & \alpha,\alpha\beta,\beta\alpha,\alpha\beta\alpha&
\alpha,\alpha\beta,\beta\alpha,\alpha\beta\alpha\\
s_\alpha{D'}_\alpha& 0 & 1, \beta, \beta\alpha, \alpha\beta\alpha &
1, \beta, \beta\alpha, \alpha\beta\alpha \\
s_\beta s_\alpha D'_\alpha& 0 & 1,\alpha,\beta,\alpha\beta&
1,\alpha,\beta,\alpha\beta \\
{D'}_{\beta} & -1 & \beta,\alpha\beta, \beta\alpha,\alpha\beta\alpha &
\beta,\alpha\beta, \beta\alpha,\alpha\beta\alpha\\
s_\beta{D'}_{\beta} & 0 &  1,\alpha, \alpha\beta,\alpha\beta\alpha &
1,\alpha, \alpha\beta,\alpha\beta\alpha\\
s_\alpha s_\beta{D'}_{\beta} & 0 & 1,\alpha,\beta,\beta\alpha &
1,\alpha,\beta,\beta\alpha
\end{array}
\]

\[
\begin{array}{c||c|c|c}
\mathbf{(b)} & v & (-\infty,0] & [0,\infty) \\
\hline \hline
{D'}_\alpha & 1 &  \alpha, \alpha\beta,\beta\alpha,\alpha\beta\alpha&
{\red 1},\alpha, {\red \beta}, \alpha\beta,\beta\alpha,\alpha\beta\alpha\\
s_\alpha{D'}_\alpha& 0 & 1, \beta, \beta\alpha, \alpha\beta\alpha &
1, {\red \alpha}, \beta, {\red \alpha\beta}, \beta\alpha, \alpha\beta\alpha \\
s_\beta s_\alpha D'_\alpha& 0 & 1,\alpha,\beta,\alpha\beta&
1,\alpha,\beta,\alpha\beta, {\red \beta\alpha}, {\red \alpha\beta\alpha} \\
{D'}_{\beta} & -1 & {\green 1}, {\green \alpha}, \beta,\alpha\beta, \beta\alpha,\alpha\beta\alpha & \beta,\alpha\beta, \beta\alpha,\alpha\beta\alpha\\
s_\beta{D'}_{\beta} & 0 &  1,\alpha, {\green \beta}, \alpha\beta, {\green \beta\alpha}, \alpha\beta\alpha &
1,\alpha, \alpha\beta,\alpha\beta\alpha\\
s_\alpha s_\beta{D'}_{\beta} & 0 & 1,\alpha,\beta, {\green \alpha\beta}, \beta\alpha, {\green \alpha\beta\alpha} &
1,\alpha,\beta,\beta\alpha
\end{array}
\]
\caption{Divisorial fans associated to complete embeddings of $\SL_3/H$.}
\label{fig:divFansCFembSL3/H}
\end{figure}

They are to be read as follows: each row is indexed by a divisor
$\overline{D'}_\bullet$. The corresponding one-dimensional slice
is subdivided at $v$ into two unbounded components. The labels of these
components are given in columns 3 and 4, respectively. Note that
we use a shorthand notation for the labels, i.e.\ $\bullet = s_\bullet$.

\subsection{$(\Gl_2\times\Gl_2)$-equivariant embeddings of $\Gl_2$}
\label{subsec:ExGlZw}
These examples are classical yet we choose to discuss all details in order to give
the reader the possibility to recall all notions which have
been defined so far.

\subsubsection{Basic setup}
\label{subsub:ExGBasic}
Let $G:=\Gl_2\times\Gl_2$ act on $\Gl_2$ by left and right multiplications, that is $(g_1,g_2):g\mapsto g_1gg_2^{-1}$.
It follows that $H:=\Delta(\Gl_2)$ where $\Delta$ denotes the diagonal embedding of $\GL_2$ to $G$.
We fix the Borel subgroup $B:=B_{\GL_2}^+\times B_{\GL_2}^- \subset G$, where $B_{\GL_2}^+$ and $B_{\GL_2}^-$
consist of upper and lower triangular matrices, respectively.
Furthermore we fix the maximal torus $\leftT\subseteq B$ given by
the diagonal matrices $(\C^*)^2\times(\C^*)^2$. Hence, we have that
$\CX_B=\CX_{\leftT}=\Z^4$ with basis $\{e^+_1,e^+_2,e^-_1,e^-_2\}$.
Finally,  $U:=B_u$ denotes the unipotent radical of $B$. As usual,
elements of $\Gl_2$ are denoted by matrices
$$
\left(\begin{array}{cc} a & b \\ c & d \end{array}\right).
$$
One easily checks that $\C(G/H)=\C(\Gl_2)=\C(a,b,c,d)$ and
$\C(\Gl_2)^U=\C(d,\det)$ with $\det=ad-bc$.
Using the exact sequence from (\ref{sub:ColFans}), we see that
the weights of these generators are
$\chi(d)= e^-_2-e^+_2$ and
$\chi(\det)= (e^-_1-e^+_1)+( e^-_2-e^+_2)$.
%
The Weyl group of $G$ is $W=\{1,s_\alpha,s_\beta,s_\alpha s_\beta\}$ with
$$
s_{\alpha} =
\left(\begin{array}{cccc} 0 & 1 & 0 & 0 \\ 1 & 0 & 0 & 0 \\
0 & 0 & 1 & 0 \\ 0 & 0 & 0 & 1 \end{array}\right)
\quad \textnormal{and} \quad
s_{\beta} =
\left(\begin{array}{cccc} 1 & 0 & 0 & 0 \\ 0 & 1 & 0 & 0 \\
0 & 0 & 0 & 1 \\ 0 & 0 & 1 & 0 \end{array}\right),
$$
i.e.,\ $s_\alpha:\, e^+_1 \leftrightarrow e^+_2$ and
$s_\beta:\, e^-_1 \leftrightarrow e^-_2$.

\subsubsection{Further ingredients}
The Bruhat decomposition of $\Gl_2$ with $W_{\Gl_2}=\{1,s\}$ yields
$\Gl_2=(B^+ B^-) \disjcup (B^+sB^-)$.
The first double class is the open orbit and
shows that $G/H=\Gl_2$ is spherical whereas the second
double class
corresponds to the unique color $\colH=V(d)$ in $\Colors$.

The normalizer $\kN$ is equal to $\Delta(\Gl_2)\cdot (\C^*\times\C^*)$.
The transitive $G$-action on $\Gl_2$ from (\ref{subsub:ExGBasic}) induces a
transitive $G$-action on $\PGL_2$ providing the isomorphism
$G/\kN=\PGL_2$. Its unique and wonderful compactification
is $\PP^3=\PGL_2\disjcup V(\det)$.

\subsubsection{The ambient spaces for the (colored) fans}
\label{subsub:ExGAmbient}
We identify $\rightT = \kN/H$ with $\C^*$ via $(t,1)\mapsto t$.
In particular, $M=\Z$, and we derive from the commutative diagram of
Lemma \ref{lem-multF} that the sequence
$$
\;0 \to \CX(G/\kN) \to \CX(G/H) \to M \to 0
$$
sends $(e^+_i-e^-_i)\mapsto 1$ for $i=1,2$,
cf.\ proof of Proposition \ref{prop-lattices}. The kernel
$\CX(G/\kN)$ is generated by $(e^-_1-e^+_1)-( e^-_2-e^+_2)$
which is $\chi(\det/d^2)$. The dual sequence
$$
0 \to  (N=\Z) \to \CX^*(G/H) \stackrel{p}{\longrightarrow} \CX^*(G/\kN)\to 0
$$
sends $1\mapsto -E^1-E^2$ and $E^1\mapsto E$, $E^2\mapsto -E$
where $\{E^1,E^2\}$ and $\{E\}$ are the dual bases of
$\{(e^-_1-e^+_1),\, ( e^-_2-e^+_2)\}=\{\chi(\det\!/d),\, \chi(d)\}$
and $\{(e^-_1-e^+_1)-( e^-_2-e^+_2)\}=\{\chi(\det\!/d^2)\}$, respectively.
Since the valuation $v_{\colH}=v_{V(d)}$ sends
$\det\!/d\mapsto -1$ and $d\mapsto 1$, we obtain $\rho_{\colH}=E^2-E^1$.
We fix the splitting $E\mapsto E^1$. This induces the projection
$\CX^*(G/H)\surj N=\Z$ with $E^1\mapsto 0$ and $E^2\mapsto -1$.
In particular,
the shift vector from (\ref{sub:toroidalThm}) equals $\shift(\colH)=-1$.

\subsubsection{The valuation cone}
\label{subsub:ExGVal}
Let us consider the $\Gl_2$-embeddings given in the following diagram
$$
\xymatrix@!0@C=5em@R=7.5ex{
& \til{\C^4} \ar[d]^-{\pi}\ar@{^(->}[r] & \til{\PP^4} \ar[d]^-{\pi}\\
\Gl_2 \ar@{^(->}[r] \ar@{^(->}[ur] & \C^4 \ar@{^(->}[r] & \PP^4
}
$$
where the upper row consists of
the blow ups at the origin of the
corresponding varieties in the lower row.
This picture provides us with three $G$-invariant divisors and their
associated valuations:
$v_{\det}\entspr V(\det)=\C^4\setminus\Gl_2$,\,
$v_{E}\entspr E=\pi^{-1}(0)$, and
$v_{\infty} \entspr \PP^4\setminus\C^4$. They send the equations
$\det\!/d=(x_1x_4-x_2x_3)/(x_0x_4)$ and $d=x_4/x_0$ to
$(1,0)$, $(1,1)$, and $(-1,-1)$, respectively.
This means that
$\rho_{\det}=E^1$,
$\rho_{E}=E^1+E^2$, and
$\rho_{\infty}=-(E^1+E^2)$.
These elements span the valuation cone
$$
\valC = \{w_1E^1+w_2E^2\kst w_1\geq w_2\} \subseteq \CX^*(G/H),
$$
i.e.\ the lower half plane which is bounded by the line
$\langle E^1+E^2\rangle$.

\begin{figure}[h]
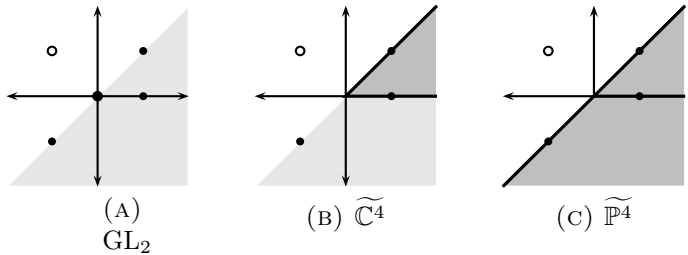

\centering
\subfloat[$\Gl_2$]{\valuationConeGlTwo}\hspace*{4ex}
\subfloat[$\til{\C^4}$]{\BUaffineFour}\hspace*{4ex}
\subfloat[$\til{\bP^4}$]{\BUprojectiveFour}
\caption{Toroidal $\Gl_2 \times \Gl_2$-equivariant embeddings of $\Gl_2$}
\label{fig:BUcompGlTwo}
\end{figure}

\subsubsection{The colored fans}
\label{subsub:ExColF}
All upper embeddings $\Gl_2\subseteq\til{\C^4}\subseteq\til{\PP^4}$
are toroidal. The uncolored cone of $\Gl_2$ is equal to $\{0\}$, the
one corresponding to $\til{\C^4}$ equals $\langle E^1,\,E^1+E^2\rangle$,
whereas $\til{\bP^4}$ is given by the complete subdivision of $\valC$
by the ray $\langle E^1\rangle$. Hence, it consists of the two
uncolored cones $\langle -(E^1+E^2),\,E^1\rangle$ and
$\langle E^1,\, E^1+E^2\rangle$, see Figure \ref{fig:BUcompGlTwo}.

Blowing down the exceptional divisor $E$ via $\pi$ gives us two
non-toroidal spherical embeddings of $\Gl_2$, namely $\C^4$ and $\bP^4$.
All we have to do is to replace the uncolored cone
$\langle E^1,\,E^1+E^2\rangle$ appearing in both blow ups
by the colored one $\big(\langle E^1,\,E^2-E_1\rangle,\; \{\colH\}\big)$,
cf.\ Figure \ref{fig:compGlTwo}.

\begin{figure}[h]
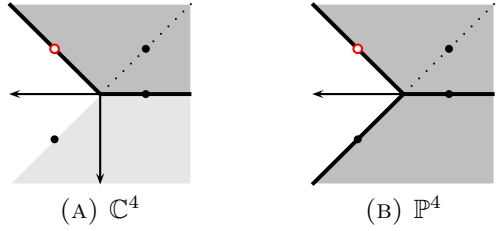

\centering
\subfloat[$\C^4$]{\affineFour}\hspace*{8ex}
\subfloat[$\bP^4$]{\projectiveFour}
\caption{Non-toroidal $\Gl_2 \times \Gl_2$-equivariant
embeddings of $\Gl_2$}
\label{fig:compGlTwo}
\end{figure}

\subsubsection{The divisorial fans}
\label{subsub:ExDivF}
The induced action of $\rightT=\C^*$ on $\C^4$ corresponds, up to sign,
to the standard $\Z$-grading of the affine coordinate ring
$\C[a,b,c,d]$.
Performing the usual downgrading procedure for the diagonal subtorus
$\C^*\hookrightarrow (\C^*)^4$, we see that the polyhedral divisor
$\pDiv$ for $\C^4$ is defined over $Y=\PP^3$ and equal to
$[1,\infty)\otimes H$ where $H=H_0\subseteq \PP^3$ denotes a hyperplane.
Actually, the toric downgrade yields
$$
\pDiv=[1,\infty)\otimes H_0 + \sum_{i=1}^3 [0,\infty)\otimes H_i
$$
with $H_i=V(z_i)$. However, since one can omit trivial summands,
i.e.\ those having just the tail cone as their coefficient, we arrive
at the description from above.
The coefficients of $H_i$ in the extended version
arise as intersections of the four affine lines
$\ell_i=e^i+\Q\cdot e$ (with $e:=\sum_i e^i$)
with the upper orthant $\Q^4_{\geq 0}=\langle e^0,\ldots,e^3\rangle$
that represents $\C^4$ as an affine toric variety.

Blowing up $0\in\C^4$, we obtain $\til{\C^4}=\ttoric(\pDiv)=\toric(\pFan)$
where the four maximal elements of the divisorial fan
$\pFan=\{\pDiv_0,\ldots,\pDiv_3\}$ are given by
$\pDiv_i:=\pDiv + \emptyset\otimes H_i$. On the one hand, this simulates
the relative $\Spec$ construction via the affine open covering
$\{\PP^3\setminus H_i\}$. On the other hand, it arises naturally from the
toric downgrade construction. Namely, for $\C^4$,
we had intersected the four lines $\ell_i$
with a single polyhedral cone.
But for $\til{\C^4}$, we subdivide
$\Q^4_{\geq 0}$ into four chambers $C_i$ by
inserting the new ray $e=\sum_i e^i$.
These smaller cones correspond to the $\pDiv_i$.
Since each of the four lines $\ell_i$ misses exactly one of them,
namely $\ell_i\cap C_i=\emptyset$,
we obtain $\emptyset$ as the coefficient of $H_i$ in $\pDiv_i$.

Representing $\PP^4$ and $\til{\PP^4}$ as toric varieties involves four
additional cones,
respectively. Hence, our toric downgrade creates another four p-divisors
$\{\pDiv'_0,\ldots,\pDiv'_3\}$ following the same
pattern as for the case of $\til{\C^4}$. Their common tail cone
becomes $(-\infty,0]$:
$$
\pDiv_i'=(-\infty,1] \otimes H_0 + \emptyset\otimes H_i\,.
$$

\subsubsection{The Grassmannian $\Grass(2,4)$}
\label{subsub:ExGGrass}
Let $W_i:=\C^2$ ($i=1,2$) be two copies of the very same
complex plane $\C^2$. By $G\subseteq\Gl_4$ we see that $G$ acts on
$\C^4=W_1\oplus W_2$, and therefore also on $\Grass(2,4)$. Since $G$
respects the decomposition of $\C^4$, its orbits are given by
$\orb(d_1,d_2):=\{V\in \Grass(2,4)\kst \dim (V\cap W_i)=d_i\}$. The
following list displays all pairs $(d_1,d_2)$ which give a non-empty
orbit: \\[-1ex]

\begin{center}
\begin{tabular}{c|c|c|c|c}
dim & 0 & 2 & 3 & 4 \\
\hline
$(d_1,d_2)$ & (2,0),(0,2) & (1,1) & (1,0),(0,1) & (0,0)\\[.5ex]
\end{tabular}
\end{center}

Let $V_0:=\{(v,v)\kst v\in\C^2=W_i\}\in\orb(0,0)$.
Then $\stab_{V_0}=\Delta(\Gl_2)$,
i.e.\ $1_G\mapsto V_0$ provides an embedding
$\Gl_2\hookrightarrow \Grass(2,4)$ of the usual type.
Its colored fan is presented in Figure \ref{fig:GrassTwoFour}.

\begin{figure}[h]
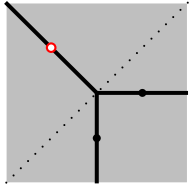

\centering
\GrassTwoFour
\caption{Colored fan of $\Grass(2,4)$}
\label{fig:GrassTwoFour}
\end{figure}

Using Pl\"ucker coordinates, the induced $\rightT$-action on $\Grass(2,4)$
can be obtained from $\deg x_{01}=1$, $\deg x_{23}=-1$, and
$\deg=0$ for the remaining variables. Then the resulting p-divisor for
$\C^6$ as well as the divisorial fan $\pFan$ for $\PP^5$
live on the four-dimensional weighted projective space $\PP(2,1,1,1,1)$.
Denoting its hyperplanes by $A,H_1,\ldots,H_4$, the slices are given by
$$
\pFan_A=(-\infty,0,1,\infty),\,
\pFan_{H_i}=(-\infty,0,\infty)\; (i=1,2,3), \mbox{ and }
\pFan_{H_4}=(-\infty,-1,\infty).
$$
The divisorial fan $\pFan$ is generated
by six p-divisors $\pDiv^0,\ldots,\pDiv^5$ corresponding to the standard
affine, open
covering of $\PP^5$. They can be visualized as labels on the cells of the
slices:
All cells $(-\infty,\kbb]$ carry the label $\pDiv^0$ and, similarly,
all $[\kbb,\infty)$ belong to $\pDiv^5$. All middle cells,
namely $[0,1]$ in $\pFan_A$ and the vertices in
$\pFan_{H_i}$, are labeled with $\{\pDiv^1,\ldots,\pDiv^4\}\setminus \pDiv^i$,
i.e.\ the $H_i$-coefficient in $\pDiv^i$ is $\emptyset$.
\\[1ex]
Finally, the embedding $\Grass(2,4)\hookrightarrow \PP^5$
corresponds to the embedding $\PP^3\hookrightarrow \PP(2,1,1,1,1)$,\,
$(a:b:c:d)\mapsto \big( (bc-ad):a:b:c:d)$ on the level of Chow quotients.
Hence, the divisorial fan of $\Grass(2,4)$ is the restriction of $\pFan$
to $\PP^3$. In particular, $H_1,\ldots,H_4$ become the standard
hyperplanes, and $A$ turns into the quadric $\det\subseteq\PP^3$.

\subsubsection{Comparison of divisorial and colored fans}
\label{subsub:ExGComp}
The slices of the divisorial fan of $\til{\PP^4}$ on $\PP^3$
from (\ref{subsub:ExDivF}) are either
$(-\infty,1,\infty)$ or $(-\infty,0,\infty)$ with four separate labels
for both the negative and positive side. This labeling together with the
presence of empty coefficients corresponds exactly to
the divisorial fan introduced in Definition \ref{def-pFanX}
in (\ref{sub:toroidalThm}):
The first summand involves the only $G$-invariant divisor
$V(\det)\subset\PP^3$. Since its coefficient equals a shift of the tail fan
$(-\infty,0,\infty)$, this sum can be incorporated in the other summands,
involving the only color $V(d)\subseteq\PP^3$.
Indeed, since the Weyl group has four elements,
both top-dimensional cells appear exactly four times.
\\[1ex]
Comparing this with the divisorial fan of $\PP^4$ on $\PP^3$,
we see in (\ref{subsub:ExDivF}) that the four different labels on the one side
merge into one common label. This reflects exactly the description
of the divisorial fan from Definition \ref{def-pFanX}:
Since $\ColorsN\setminus\CF=\emptyset$,
the last sum becomes void for these cells.
\\[1ex]
Finally, we consider the colored fan of
$\Grass(2,4)$, see Figure \ref{fig:GrassTwoFour}. It is induced from
the subdivision of $\valC$ by two rays, namely those spanned
by $E^1$ and $-E^2$, respectively. Note that there are two maximal cones
which are not contained in $\valC$ since they contain the color as a
generator. This means that $\ColorsN\setminus\CF=\emptyset$
occurs now on both sides -- creating the simple labelings by $\pDiv^0$ and
$\pDiv^5$ in (\ref{subsub:ExGGrass}). Moreover, the polyhedral coefficient
$\pFan_A$ clearly is the intersection of the colored fan with an affine
line within the valuation cone. However, this summand cannot be
incorporated in the others as it was possible for $\PP^4$. The reason is
that it carries a richer structure as just being a shift of the tail fan.



\begin{thebibliography}{KKMSD73}


\bibitem[AB04]{toricDegSpherical}
V. Alexeev and M. Brion.
\newblock Toric degenerations of spherical varieties.
\newblock {\em Sel. Math., New Ser.}, 10(4):453--478, 2004.

\bibitem[AH03]{tvar0}
K. Altmann and J. Hausen.
\newblock Polyhedral divisors and algebraic torus actions, extended version.
\newblock arXiv:math/0306285v1, 2003.

\bibitem[AH06]{tvar1}
K. Altmann and J. Hausen.
\newblock {Polyhedral Divisors and Algebraic Torus Actions}.
\newblock {\em Math. Ann.}, 334:557--607, 2006.

\bibitem[AHS08]{tvar2}
K. Altmann, J. Hausen, and H. S{\"u}{\ss}.
\newblock {Gluing Affine Torus Actions Via Divisorial Fans}.
\newblock {\em Transformation Groups}, 13(2):215--242, 2008.

\bibitem[Bra10]{Bravi}
P.~Bravi.
\newblock {Classification of spherical varieties.}
\newblock volume~1 of {\em Les
  cours du CIRM, 1 no. 1: Actions hamiltoniennes : invariants et
  classification}, 2010.

\bibitem[Bri97]{BrionFrench}
M. Brion.
\newblock {Vari\'{e}t\'{e}s sph\'{e}riques.}
\newblock Notes de la session de la S. M. F. "Op\'{e}rations hamiltoniennes et
  op\'{e}rations de groupes alg\'{e}briques" (Grenoble),\\
\newblock available at http://www-fourier.ujf-grenoble.fr/\verb=~=mbrion/notes.html,
  1997.

\bibitem[Bri01]{BrionOrbits}
M. Brion
\newblock On orbit closures of spherical subgroups in flag varieties.
\newblock {\em Comment. Math. Helv.}, 76(2):263-299, (2001)


\bibitem[Bri07a]{BrionLogHom}
M. Brion.
\newblock Log homogeneous varieties.
\newblock In Walter (ed.) et~al. Ferrer~Santos, editor, {\em Actas del XVI
  coloquio Latinoamericano de \'algebra}, pages 1--39. Revista Matem\'atica
  Iberoamericana, 2007.

\bibitem[Bri07b]{coxWonderful}
M. Brion.
\newblock The total coordinate ring of a wonderful variety.
\newblock {\em J. Algebra}, 313(1):61--99, 2007.

\bibitem[Bri09]{Bremen}
M. Brion.
\newblock Spherical varieties.
\newblock Notes of a mini course by R.~Devyatov, D.~Fratila, V.~Tsanov,
\newblock  {\em Highlights in Lie algebraic methods}, 3--24, Progr. Math., 295, Birkh\"auser/Springer, New York, 2012

\bibitem[DCP83]{cp1}
C.~De~Concini and C.~Procesi.
\newblock Complete symmetric varieties.
\newblock volume 996 of {\em Lecture Notes
  in Mathematics}, Springer, Berlin, 1983.

\bibitem[DCP85]{cp2}
C.~De~Concini and C.~Procesi.
\newblock { Complete symmetric varieties II ({I}ntersection {T}heory).}
  \newblock volume~6 of {\em Algebraic groups and related topics}, 1985.

\bibitem[Gag]{coxSpherical}
G.~Gagliardi.
\newblock The {C}ox ring of a spherical embedding.
\newblock {\em J. Algebra}, 397(1):548--569, 2013.

\bibitem[HS10]{tvarcox}
J. Hausen and H. S{\"u}{\ss}.
\newblock The {C}ox ring of an algebraic variety with torus action.
\newblock {\em Adv. Math.}, 225(2):977--1012, 2010.

\bibitem[Kno91]{Knop}
F.~Knop.
\newblock {The Luna-Vust Theory of Spherical Embeddings}.
\newblock {\em Proceedings of the Hyderabad Conference on Algebraic Groups,
  Manoj-Prakashan}, pages 225--249, 1991.


\bibitem[LV83]{LV}
D.~Luna and Th. Vust.
\newblock Plongements d'espaces homog\`enes.
\newblock {\em Comment. Math. Helv.}, 6:186--245, 1983.

\bibitem[Pas]{pasquier1}
B. Pasquier.
\newblock {Vari\'{e}t\'{e}s horosph\'{e}riques de Fano}.
\newblock {\em Bull. Soc. Math. France} 136(2):195--225, 2008

\bibitem[Res]{Ressayre}
N. Ressayre.
\newblock {Spherical homogeneous spaces of minimal rank}.
\newblock {\em Adv. Math.}, 224(5):1784--1800, 2010.


\bibitem[Spr]{Springer}
T.A. Springer.
\newblock Linear Algebraic Groups.
\newblock Number~9 in Progress in Mathematics. Birkh\"auser-Verlag, Boston.


\bibitem[Tim11]{timashev1}
D.A. Timashev.
\newblock Homogeneous spaces and equivariant embeddings,
\newblock volume 138 of
  {\em Encyclopaedia of Mathematical Sciences}, Springer, Berlin, 2011.

\end{thebibliography}
\end{document}